\newtheorem{theo}{Theorem}[section]
\newtheorem{lem}{Lemma}[section]
\newtheorem{cor}{Corollary}[section]
\def\Ce#1#2{Cs_{p,#1}(#2)}
\def\Ces{Cs_{p,w}(I)}
\def\Csp{Ch_{p,w}(I)}
\def\loc{\text{\rm loc}}
\def\Ze{\mathbb{Z}}
\def\leb{{\cal L}^1}
\def\supp{\mathop{\rm supp}}
\def\sgn{\mathop{\rm sgn}}
\begin{document}

\begin{center}
 \bf\large On the dual spaces for weighted altered Ces\`aro and Copson  spaces
\end{center}
\begin{center}
Dmitrii V. Prokhorov\\
Computing Center of the Far Eastern Branch\\
of the Russian Academy of Sciences\\
680000 Russia, Khabarovsk, ul. Kim Yu Chena 65\\
prohorov@as.khb.ru
\end{center}

\noindent{\bf Abstract:} We study weighted altered Ces\`aro and Copson spaces, which is non-ideal enlargement of the usual  spaces. We  give full characterization of dual spaces for the spaces.
\vskip 2mm

\noindent{\bf AMS 2020 Mathematics Subject Classification:} 46E30.
\vskip 2mm

\noindent{\bf Key words and phrases:}  Ces\`aro and Copson spaces,  dual spaces.

\section{Introduction}

Let $I:=(c,d)\subset\mathbb{R}$, $p\in(1,\infty)$, $p':=\frac{p}{p-1}$,  ${\cal L}^1$ be Lebesgue measure on $I$ and $\mathfrak{M}(I)$ be the space of all ${\cal L}^1$-measurable functions $f:I\to [-\infty,\infty]$. For $x\in I$ we put $I_{1,x}:=I_{2,x}^*:=(c,x]$ and $I_{1,x}^*:=I_{2,x}:=[x,d)$, and for $\nu\in\{1,2\}$ denote
$$L^1_{\loc,(\nu,1)}:=\Big\{f\in\mathfrak{M}(I):\int_{I_{\nu,x}} |f|<\infty,\,\forall x\in I\Big\},$$
$$L^1_{\loc,(\nu,2)}:=\Big\{f\in\mathfrak{M}(I):\int_{I_{\nu,x}^*} |f|<\infty,\,\forall x\in I\Big\}.$$

Let $w\in \mathfrak{M}(I)$, $w>0$ $\leb$-a.e. on $I$,  
and the space $$\Ces:=\{f\in \mathfrak{M}(I):\|f\|_{\Ces}<\infty\},$$
defined by the seminorm
$$\|f\|_{\Ces}:=\left(\int_I \left[w(x)\int_{I_{\nu,x}}|f|\right]^p\,dx\right)^\frac{1}{p}.$$
The space $\Ces$ is order ideal and has absolutely continuous norm (see \cite[Chapter 1, Sections 3 and 4]{BenSharp-1988}).
In case $\nu=1$ the space $\Ces$ is called the  Ces\`aro space, in case $\nu=2$ the space $\Ces$ is called the Copson space.

The classical Ces\`aro space  $\Ce{\frac{1}{x}}{I}$ (in case of $\nu=1$, $I=(0,\infty)$, $w(x)=\frac{1}{x}$, $x\in I$) was actively studied in works \cite{AM2011,AM,AMint}, to weight spaces $\Ces$ (for $\nu=1$, $I=(0,l)$,  and  weight $w$ with properties:  $w>0$ ${\cal L}^1$-a.e on $I$, $w^p\in L^1_{\loc,(\nu,2)}$ and $\int_I w^p=\infty$) are devoted  the article \cite{KamKub}, some abstract Ces\`aro spaces are considered in \cite{LM}. One of the first problems was the problem of describing the associated space  (see \cite[Chapter 1, Section 2]{BenSharp-1988})  and  dual space for the Ces\`aro space. In \cite[Theorem 3.9]{KamKub} the description of associated space with $\Ces$ is given with help of essential $\int_{I_x^*} w^p$-concave majorant  (see \cite[Definition 2.11]{KamKub}).

Let $(X,\|\cdot\|_X)$ be  normed space of ${\cal L}^1$-measurable functions on $I$. For any $Y\subset X$ and $g\in \mathfrak{M}(I)$  we denote
$$J_{X}(g;Y):=\sup_{h\in Y,\,h\not=0}\frac{|\int_I hg|}{\|h\|_{X}},\ \ \mathbf{J}_{X}(g;Y):=\sup_{h\in Y,\,h\not=0}\frac{\int_I |hg|}{\|h\|_{X}}.$$
If $Y=X$, then we write $J_X(g)$ and $\mathbf{J}_{X}(g)$ instead of $J_{X}(g;X)$ and $\mathbf{J}_{X}(g;X)$, respectively. 
The space associated (K\"othe dual space) with a  normed space $X$ is
$$X'_s:=\{g\in \mathfrak{M}(I):\|g\|_{X'_s}:=\mathbf{J}_X(g)<\infty\}.$$
Note for some spaces the value $J_X(g)$ and $\mathbf{J}_X(g)$ may be differ. Therefore, it makes sense to consider the problem of describing the space 
$$ X'_w:=\{g\in \mathfrak{M}(I):\|g\|_{X'_w}:=J_X(g)<\infty\},$$
which is the subspace $X^*$ of all continuous functionals of the form $f\mapsto \int_I fg$, $f\in X$. The down-spaces $X^\downarrow_{\alpha,s}$, $X^\downarrow_{\alpha,w}$, where $\alpha\in\{s,w\}$, are defined by formulas $Y_\alpha:=\{h\in X'_\alpha: h\ge 0,\  h\downarrow\}$ and
$$X^\downarrow_{\alpha,s}:=\{g\in \mathfrak{M}(I):\|g\|_{X^\downarrow_{\alpha,s}}:=\mathbf{J}_{X'_\alpha}(g;Y_\alpha)<\infty\},$$
$$X^\downarrow_{\alpha,w}:=\{g\in \mathfrak{M}(I):\|g\|_{X^\downarrow_{\alpha,w}}:=J_{X'_\alpha}(g;Y_\alpha)<\infty\}.$$

Remark that $J_{\Ces}(g)=\mathbf{J}_{\Ces}(g)$ for any $g\in \mathfrak{M}(I)$, and hence $(\Ces)'_s=(\Ces)'_w$.

If $w^p\in L^1_{\loc,(\nu,2)}$ and $w>0$ $\leb$-a.e. on $I$ then the set of simple function with compact support in $I$ is subset of $\Ces$,  and there is the embedding $C_0^\infty(I)\subset \Ces$. In this case, since $\Ces$ is order ideal and it has absolutely continuous norm,  for a $\Lambda\in (\Ces)^*$ there exists $g\in (\Ces)'_s$ such that $\|\Lambda\|_{(\Ces)^*}=\|g\|_{(\Ces)'_s}$ and $\Lambda f=\int_I fg$, $f\in\Ces$ (see \cite[Chapter 1, Theorem 4.1]{BenSharp-1988}).

We assume 
\begin{equation}\label{polozh}
I:=(c,d)\subset\mathbb{R},\ p\in (1,\infty),\ \nu\in\{1,2\},\ w^p\in L^1_{\loc,(\nu,2)},\  w>0\ \leb\text{-a.e. on }I.
\end{equation}
In the present paper we study  weighted  altered Ces\`aro\,-\,Copson space 
$$\Csp:=\{f\in L^1_{\loc,(\nu,1)}:  \|f\|_{\Csp}<\infty\},$$
defined by the seminorm 
$$\|f\|_{\Csp}:=\left(\int_I \left|w(x)\int_{I_{\nu,x}}f\right|^p\,dx\right)^\frac{1}{p}.$$

Altered Ces\`aro\,-\,Copson space (sometimes called Ces\`aro\,-\,Copson spaces of non-absolute type) with power weight appears (see \cite{Pr2022,Step2022}) during describing the associated space for the  one weighted Sobolev space of first order on real line \cite{PSU0,PSU-UMN}.

The paper is organized as follows: In Section~2 we prove density $\Ces$ in $\Csp$ and the equality $(\Csp)'_s=\{0\}$. In Section~3 we give full characterization of spaces $(\Csp)'_w$ and  $(\Csp)^*$.

Throughout this paper  
$\mathbb{N}$ stands for the set of positive integers, $\mathbb{Z}$ is the set of all integers, $Df$ is the derivative of function $f$.

\section{Base properties}

\begin{lem}\label{sequence} Let \eqref{polozh} hold.  For any $f\in \Csp$ there exist $\{\alpha_k\}_1^\infty$, $\{\beta_k\}_1^\infty$ such that $\alpha_k,\beta_k\in I$, $\alpha_k\downarrow c$, $\beta_k\uparrow d$ as $k\to\infty$ and
 $$\lim_{k\to\infty}\left|\int_{I_{\nu,\alpha_k}}f\right|^p\int_{I_{\nu,\alpha_k}^*} w^p=\lim_{k\to\infty}\left|\int_{I_{\nu,\beta_k}}f\right|^p\int_{I_{\nu,\beta_k}^*} w^p=0.$$
\end{lem}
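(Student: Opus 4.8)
The plan is to reduce the statement to showing that $\liminf_{x\to c}|F(x)|^pW(x)=0$ and $\liminf_{x\to d}|F(x)|^pW(x)=0$, where $F(x):=\int_{I_{\nu,x}}f$ (finite for each $x$, since $f\in L^1_{\loc,(\nu,1)}$) and $W(x):=\int_{I_{\nu,x}^*}w^p$ (finite by \eqref{polozh}); once these hold, one picks $\alpha_k\downarrow c$ and $\beta_k\uparrow d$ inductively so that $|F(\alpha_k)|^pW(\alpha_k)\to0$ and $|F(\beta_k)|^pW(\beta_k)\to0$, which is exactly the assertion. Throughout I will use that $\|f\|_{\Csp}<\infty$ means $\int_I|F(x)|^pw(x)^p\,dx<\infty$, that $W>0$ on all of $I$ (because $w>0$ $\leb$-a.e.), that $W$ is locally absolutely continuous on $I$ with $W'=\pm w^p$ a.e.\ (the sign depending only on $\nu$), so that $w^p/W=\pm(\ln W)'$ there, and that $W(x)\le M$ on $I$, where $M:=\int_Iw^p\in(0,\infty]$.

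First I would treat the endpoint $e\in\{c,d\}$ at which $I_{\nu,x}^*$ shrinks to a point (namely $e=d$ for $\nu=1$ and $e=c$ for $\nu=2$), so that $W(x)\to0$ as $x\to e$. Arguing by contradiction, suppose $|F(x)|^pW(x)\ge\ell$ for some $\ell>0$ on a one-sided neighbourhood $J$ of $e$ with interior endpoint $x_0$. Then $|F(x)|^p\ge\ell/W(x)$ on $J$, hence
$$\int_J|F(x)|^pw(x)^p\,dx\ \ge\ \ell\int_J\frac{w(x)^p}{W(x)}\,dx\ =\ \ell\,|\ln W(x_0)-\ln W(e)|,$$
where $W(e):=\lim_{x\to e}W(x)$ and the last step uses that $w^p/W$ is a signed logarithmic derivative of $W$ on compact subintervals. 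Since $W(e)=0$ here, the right-hand side is $+\infty$, contradicting $\int_I|F|^pw^p<\infty$; thus $\liminf_{x\to e}|F(x)|^pW(x)=0$ at this endpoint.

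At the other endpoint $e$, where it is $I_{\nu,x}$ that shrinks to a point, I would first observe that $F(x)\to0$ as $x\to e$, by dominated convergence applied to $\mathbf 1_{I_{\nu,x}}f$ (with majorant $\mathbf 1_{I_{\nu,x_0}}|f|\in L^1$ furnished by $f\in L^1_{\loc,(\nu,1)}$), and then split on the value of $M$. If $M<\infty$, then $W\le M$ gives $|F(x)|^pW(x)\le M|F(x)|^p\to0$, so even the full limit at $e$ vanishes. If $M=\infty$, then $W(x)\to\infty$ as $x\to e$ and the contradiction argument above applies word for word, the term $\ell\,|\ln W(x_0)-\ln W(e)|$ being $+\infty$ now because $W(e)=\infty$. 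In either case $\liminf_{x\to e}|F(x)|^pW(x)=0$, which completes the proof.

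The only step requiring genuine care, and the one on which everything hinges, is the evaluation of $\int_Jw^p/W$ as a difference of logarithms of $W$: for this one needs $W$ to be locally absolutely continuous and strictly positive on $I$, so that $w^p/W$ is precisely $\pm(\ln W)'$ and its integral over a one-sided neighbourhood of $e$ diverges exactly when $W(e)\in\{0,\infty\}$. This is where the hypotheses $w^p\in L^1_{\loc,(\nu,2)}$ and $w>0$ $\leb$-a.e.\ enter; the rest is routine bookkeeping over $\nu\in\{1,2\}$ and the two endpoints, which only interchange $c$ and $d$ and flip a sign in $W'$.
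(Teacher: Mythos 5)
Your proof is correct, and it takes a genuinely different route from the paper's. Writing $F(x):=\int_{I_{\nu,x}}f$ and $W(x):=\int_{I_{\nu,x}^*}w^p$, you reduce the lemma to $\liminf_{x\to c}|F|^pW=\liminf_{x\to d}|F|^pW=0$ and argue by contradiction: a bound $|F|^pW\ge\ell>0$ on a one-sided neighbourhood of an endpoint forces $\int|F|^pw^p\ge\ell\int w^p/W=\ell\,|\Delta\ln W|=\infty$ whenever $W$ tends to $0$ or to $\infty$ there, contradicting $f\in\Csp$; the only remaining sub-case ($W$ tending to a finite limit, possible only at the endpoint where $I_{\nu,x}$ degenerates and $\int_Iw^p<\infty$) is settled by $F\to 0$ via dominated convergence. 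The paper instead argues constructively: it fixes dyadic level points $a_k$ with $\int_{I_{\nu,a_k}^*}w^p=2^k$, bounds $\inf_{s\in(a_k,a_{k+1})}|F(s)|^pW(s)$ by $2\int_{a_k}^{a_{k+1}}|wF|^p$, and lets this tend to $0$ as the tail (respectively, the shrinking head) of a convergent integral, choosing $\beta_k$ and $\alpha_k$ inside the blocks; the sub-case $\int_Iw^p<\infty$ at one endpoint is handled exactly as in your argument. Both proofs rest on the same dichotomy and on the same quantitative fact --- that $\int w^p/W$ diverges precisely when $W$ degenerates at the endpoint --- which you package as a logarithmic derivative and the paper packages as a geometric decomposition. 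Your version is a little shorter and avoids the index bookkeeping for the $a_k$ (which in the paper is slightly delicate when $\int_Iw^p$ is small), at the cost of extracting the monotone sequences from a $\liminf$ rather than exhibiting them explicitly; the key identity $\int_{c'}^{x_0}w^p/W=|\ln W(x_0)-\ln W(c')|$ is justified exactly as you say, since $W$ is a positive, monotone, locally absolutely continuous primitive of $\pm w^p$ on $I$.
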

 
\begin{proof}
Denote $N:=\sup\{k\in\Ze: 2^k\le\int_I w^p\}+1$ and define $\{a_k\}_{k<N}$ by equality $\int_{I_{\nu,a_k}^*} w^p=2^k$.
Assume that $\nu=2$.

1. If $\int_I w^p<\infty$ then since $f\in L^1_{\loc,(\nu,1)}$ we have $\lim_{s\to d-0}\left|\int_s^df\right|^p\int_c^s w^p=0$
and the required sequence $\{\beta_k\}_1^\infty$ exists.

Let $\int_I w^p=\infty$. Then $N=\infty$ and for $k\ge 1$ we have
\begin{align*}
&\inf_{s\in (a_k,a_{k+1})} \left|\int_s^d f\right|^p\int_c^s w^p\le 2\left[\inf_{s\in (a_k,a_{k+1})} \left|\int_s^df\right|^p\right]\int_{a_k}^{a_{k+1}} w^p\\
&\le 2 \int_{a_k}^{a_{k+1}} \left|w(x)\int_x^d f\right|^p\,dx\le 2 \int_{a_k}^d \left|w(x)\int_x^d f\right|^p\,dx.
\end{align*}
Hence
$$\lim_{k\to\infty}\inf_{s\in (a_k,a_{k+1})} \left|\int_s^df\right|^p\int_c^s w^p=0,$$
and there exists $\beta_k\in (a_k,a_{k+1})$ such that 
$$\left|\int_{\beta_k}^df\right|^p\int_c^{\beta_k} w^p<\frac{1}{k}+\inf_{s\in (a_k,a_{k+1})} \left|\int_s^d f\right|^p\int_c^s w^p.$$
Thus,  $\beta_k\uparrow d$ as $k\to\infty$ and $\lim_{k\to\infty}\left|\int_{\beta_k}^d f\right|^p\int_c^{\beta_k} w^p=0$.

2. For $k\ge 1$ we have
\begin{align*}
\inf_{s\in (a_{-k},a_{-k+1})} \left|\int_s^df\right|^p\int_c^s w^p&\le 2 \int_{a_{-k}}^{a_{-k+1}} \left|w(x)\int_x^d f\right|^p\,dx\\
&\le 2 \int_c^{a_{-k+1}} \left|w(x)\int_x^d f\right|^p\,dx.
\end{align*}
And analogously step 1 we find a sequence $\{\alpha_k\}_1^\infty$ such that  $\alpha_k\downarrow c$ as $k\to\infty$ and $\lim_{k\to\infty}\left|\int_{\alpha_k}^df\right|^p\int_c^{\alpha_k} w^p=0$.

The case $\nu=1$ is proved, analogously.
\end{proof}

\begin{cor}\label{dences}  Let \eqref{polozh} hold. Then $\Ces$ is dense in $\Csp$.
\end{cor}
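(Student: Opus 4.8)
The plan is to show that any $f\in\Csp$ can be approximated in the $\Csp$-seminorm by functions obtained from $f$ by truncation near the endpoints $c$ and $d$, and that each such truncated function lies in $\Ces$. Concretely, using the sequences $\{\alpha_k\},\{\beta_k\}$ furnished by Lemma~\ref{sequence}, I would define $f_k:=f\cdot\chi_{(\alpha_k,\beta_k)}$ (for $\nu=2$, with the obvious symmetric arrangement for $\nu=1$) and claim that $f_k\in\Ces$ and $\|f-f_k\|_{\Csp}\to 0$.

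For the membership $f_k\in\Ces$: since $f\in L^1_{\loc,(\nu,1)}$, the function $|f_k|$ is integrable on all of $I$ (it is supported on the compact-in-$I$ set $[\alpha_k,\beta_k]$ on which $\int|f|<\infty$), so $\int_{I_{\nu,x}}|f_k|$ is bounded by $\|f_k\|_{L^1}$ uniformly in $x$. Hence $\|f_k\|_{\Ces}^p=\int_I\big(w(x)\int_{I_{\nu,x}}|f_k|\big)^p\,dx\le \|f_k\|_{L^1}^p\int_I w^p$; this is finite because $w^p\in L^1_{\loc,(\nu,2)}$ gives $\int_I w^p<\infty$ or, more carefully, I split the integral using the local integrability of $w^p$ — in fact for $\nu=2$, $\int_{I_{\nu,x}}|f_k|=0$ for $x$ near $c$ below $\alpha_k$, so the $x$-integral effectively runs over $(\alpha_k,d)$ where... one still needs $\int_{\alpha_k}^d w^p<\infty$. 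This is precisely what the construction in Lemma~\ref{sequence} gives when $\int_I w^p<\infty$; when $\int_I w^p=\infty$ one instead truncates $w$ as well, or notes that $\int_{I_{\nu,x}}|f_k|$ is eventually constant equal to $\|f_k\|_{L^1}$ and that term forces us to keep the tail of $w^p$. I would therefore refine the definition: replace $f_k$ by $f_k:=(f-\bar f_k)\chi_{(\alpha_k,\beta_k)}$ where $\bar f_k$ is a constant (or a small correcting function supported near $\beta_k$) chosen so that $\int_{I}f_k=0$, i.e. so that $\int_{\alpha_k}^{\beta_k}f_k=0$; then $\int_{I_{\nu,x}}|f_k|$ is compactly supported in $x$ too and $f_k\in\Ces$ trivially. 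The correction is of size $\approx|\int_{\alpha_k}^{\beta_k}f|$ spread over a fixed small interval, and I must check it does not spoil the approximation.

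For the convergence $\|f-f_k\|_{\Csp}\to0$: writing $g_k:=f-f_k$, for $\nu=2$ and $x\in(\alpha_k,\beta_k)$ one has $\int_x^d g_k=\int_c^{\alpha_k}f+\int_x^df - \int_x^d f + (\text{correction}) $ — more simply $\int_{I_{2,x}}g_k$ equals $\int_{I_{2,x}}f$ minus the part over $(\alpha_k,\beta_k)\cap I_{2,x}$, so on $x<\alpha_k$ it is $\int_x^d f$, on $x>\beta_k$ it is $\int_x^d f$, and on $x\in(\alpha_k,\beta_k)$ it is $\int_x^{\beta_k}f + \int_{\beta_k}^d f$ minus $\int_x^{\beta_k}f_k$; in all cases it is controlled by $\big|\int_x^d f\big|$ plus a term supported near the endpoints of size $\big|\int_{\alpha_k}^{\beta_k}f\big|$. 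Thus
\[
\|g_k\|_{\Csp}^p\le C\int_{(c,\alpha_k)\cup(\beta_k,d)}\Big|w(x)\int_x^d f\Big|^p\,dx + C\Big|\int_{\alpha_k}^{\beta_k}f\Big|^p\int_{(\text{small set})} w^p .
\]
The first term tends to $0$ because it is a tail of the finite integral $\|f\|_{\Csp}^p$; the second term tends to $0$ by the two limit relations in Lemma~\ref{sequence}, which say exactly that $\big|\int_{I_{\nu,\alpha_k}}f\big|^p\int_{I_{\nu,\alpha_k}^*}w^p\to0$ and similarly at $d$ — and $\big|\int_{\alpha_k}^{\beta_k}f\big|\le\big|\int_c^{\alpha_k}f\big|+\big|\int_c^{\beta_k}f\big|$ (here $\int_c^x f$ makes sense for $f\in L^1_{\loc,(\nu,1)}$ with $\nu=2$ reading $I_{2,x}^*=(c,x]$), no: for $\nu=2$ use $\int_{\alpha_k}^{\beta_k}f = \int_{\alpha_k}^d f - \int_{\beta_k}^d f$. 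I expect the main obstacle to be the bookkeeping around the endpoint where the "tail mass" $\int w^p$ is infinite: one must arrange the correction $\bar f_k$ so simultaneously $f_k\in\Ces$ (finite $\Ces$-seminorm, needing the cumulative integral of $|f_k|$ to vanish outside a relatively compact set) and $\|f-f_k\|_{\Csp}\to0$, and verify these are compatible — which they are precisely because of Lemma~\ref{sequence}. Finally the case $\nu=1$ follows by the symmetric argument, as in the lemma.
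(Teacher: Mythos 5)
Your overall strategy --- truncate $f$ to $[\alpha_k,\beta_k]$ using the sequences from Lemma~\ref{sequence} and estimate $\|f-f_k\|_{\Csp}$ by splitting $I$ into $(c,\alpha_k)$, $(\alpha_k,\beta_k)$, $(\beta_k,d)$ --- is exactly the paper's. The problem is the detour you take over the membership $f_k\in\Ces$, which rests on a misreading of the orientation of $I_{\nu,x}$. For $\nu=2$ one has $I_{2,x}=[x,d)$, so $\int_{I_{2,x}}|f_k|=\int_x^d|f\chi_{[\alpha_k,\beta_k]}|$ vanishes for $x\ge\beta_k$ (not for $x\le\alpha_k$) and is bounded by $\int_{\alpha_k}^{\beta_k}|f|$ elsewhere; hence $\|f_k\|_{\Ces}^p\le\bigl[\int_{\alpha_k}^{\beta_k}|f|\bigr]^p\int_c^{\beta_k}w^p$, and $\int_c^{\beta_k}w^p=\int_{I_{2,\beta_k}^*}w^p<\infty$ is exactly the standing hypothesis $w^p\in L^1_{\loc,(\nu,2)}$ in \eqref{polozh}. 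So the plain truncation already lies in $\Ces$, with no case distinction on $\int_I w^p$, and the quantity $\int_{\alpha_k}^d w^p$ you worry about never enters. (For $\nu=1$ the roles of the endpoints are exchanged and the same matching occurs.)

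Worse, the ``mean-zero correction'' you propose to repair this non-issue would not do what you claim even if it were needed: choosing $\bar f_k$ so that $\int_{\alpha_k}^{\beta_k}f_k=0$ makes the \emph{signed} cumulative integral $\int_{I_{\nu,x}}f_k$ compactly supported in $x$, but the $\Ces$-seminorm involves $\int_{I_{\nu,x}}|f_k|$, which still equals the full mass $\|f_k\|_{L^1}$ on one whole side of $[\alpha_k,\beta_k]$ no matter what cancellation you arrange. So the assertion ``then $\int_{I_{\nu,x}}|f_k|$ is compactly supported in $x$ too and $f_k\in\Ces$ trivially'' is false, and the correction only creates the unresolved bookkeeping you yourself flag at the end. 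Drop it, keep $f_k=f\chi_{[\alpha_k,\beta_k]}$, and the convergence estimate closes as in the paper: on $(\alpha_k,\beta_k)$ the residual is the constant $\int_{I_{\nu,\beta_k}}f$, giving the term $\bigl|\int_{I_{\nu,\beta_k}}f\bigr|^p\int_{\alpha_k}^{\beta_k}w^p\le\bigl|\int_{I_{\nu,\beta_k}}f\bigr|^p\int_{I_{\nu,\beta_k}^*}w^p\to 0$ by Lemma~\ref{sequence} (note this set is not ``small'' in $w^p$-measure --- the smallness comes from the paired factor), while the outer regions are handled by the tails of the convergent integral $\|f\|_{\Csp}^p$ together with the two limit relations of Lemma~\ref{sequence}.
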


\begin{proof} Assume that $\nu=2$. 
  Fix $f\in \Csp$. Let   $\{\alpha_k\}_1^\infty$, $\{\beta_k\}_1^\infty$ be the sequences existence of which is proved in Lemma \ref{sequence}. For  $n\in\mathbb{N}$ we put $f_n:=f\chi_{[\alpha_n,\,\beta_n]}$. Then $f_n\in \Ces$ and 
\begin{align*}
 &\|f-f_n\|_{\Csp}^p=\left[\int_{\beta_n}^d+\int_{\alpha_n}^{\beta_n}+\int_c^{\alpha_n}\right]\,\left|w(x)\int_{I_{\nu,x}}(f-f_n)\right|^p\,dx\\
 &=\int_{\beta_n}^d\left|w(x)\int_{I_{\nu,x}}f\right|^p\,dx+ \left|\int_{I_{\nu,\beta_n}}f\right|^p\int_{\alpha_n}^{\beta_n}w^p+\\
 &+\int_c^{\alpha_n} w(x)^p\left|\int_{I_{\nu,x}}f-\int_{I_{\nu,\alpha_n}}f+\int_{I_{\nu,\beta_n}}f\right|^p\,dx\\
 &\le \int_{\beta_n}^d\left|w(x)\int_{I_{\nu,x}}f\right|^p\,dx+4^p\left|\int_{I_{\nu,\beta_n}}f\right|^p\int_{I_{\nu,\beta_n}^*}w^p+\\
 &+4^p\left|\int_{I_{\nu,\alpha_n}}f\right|^p\int_{I_{\nu,\alpha_n}^*}w^p+4^p\int_c^{\alpha_n}\left|w(x)\int_{I_{\nu,x}}f\right|^p\,dx,
 \end{align*}
 that is $\lim_{n\to\infty}\|f-f_n\|_{\Csp}=0$.
 The case $\nu=1$ is proved, analogously.
\end{proof}

\begin{lem}\label{chasto}
 Let \eqref{polozh} hold, $[a,b]\subset I$ and $h\in L^1([a,b])$. For any $\varepsilon >0$ there exists $f\in \Ces$ such that $|f|=|h|$ on $(a,b)$, $\supp f\subset[a,b]$ and $\|f\|_{\Csp}<\varepsilon$. 
\end{lem}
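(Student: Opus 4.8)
The plan is to exploit the sign cancellation available in the altered seminorm $\|\cdot\|_{\Csp}$: I shall keep $|f|=|h|$ but steer the sign of $f$ so that \emph{every} tail integral $\int_{I_{\nu,x}}f$ stays uniformly small, which is precisely why $\Csp$ is so much bigger than $\Ces$. First I would reduce to $\nu=2$ (the case $\nu=1$ is the left--right mirror image, treated at the end), put $g:=|h|\in L^1([a,b])$, and dispose of the trivial case $g=0$ ${\cal L}^1$-a.e.\ by taking $f:=0$. So assume $\int_a^b g>0$ and fix a small parameter $\eta>0$ to be chosen last.

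The heart of the argument is a ``reflection'' choice of signs. Let $\psi_\eta\colon\mathbb{R}\to[-\eta,\eta]$ be the continuous $4\eta$-periodic triangle wave with $\psi_\eta(0)=0$ and $|\psi_\eta'|=1$ ${\cal L}^1$-a.e.; it is $1$-Lipschitz. Set $G(x):=\int_x^b g$ for $x\in[a,b]$, let $\sigma(x):=\psi_\eta'(G(x))$ where this derivative exists and $\sigma(x):=1$ otherwise, and define $f:=\sigma h$ on $(a,b)$, $f:=0$ on $I\setminus[a,b]$. Then $|f|=|h|$ on $(a,b)$ and $\supp f\subset[a,b]$. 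Since $G$ is absolutely continuous and $\psi_\eta$ is Lipschitz, the composition $\psi_\eta\circ G$ is absolutely continuous with $(\psi_\eta\circ G)'=-(\psi_\eta'\circ G)\,g=-f$ a.e.\ on $(a,b)$ by the chain rule; integrating from $x$ to $b$ and using $\psi_\eta(G(b))=0$ gives
$$\int_{I_{2,x}}f=\int_x^b f=\psi_\eta(G(x)),\qquad\text{hence}\qquad\Big|\int_{I_{2,x}}f\Big|\le\eta\quad(x\in[a,b]).$$
Also $\int_{I_{2,x}}|f|=\int_{\max(x,a)}^b g\le\int_a^b g$ for $x<b$ and $=0$ for $x\ge b$, so $\|f\|_{\Ces}^p\le\big(\int_a^b g\big)^p\int_{(c,b]}w^p<\infty$ by \eqref{polozh}; thus $f\in\Ces$.

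It then remains to estimate $\|f\|_{\Csp}$. Splitting $I=(c,a]\cup(a,b)\cup[b,d)$ and noting $\int_{I_{2,x}}f=0$ for $x\ge b$, $\int_{I_{2,x}}f=\int_a^b f=\psi_\eta(G(a))$ for $x\le a$, and the bound $|\int_{I_{2,x}}f|\le\eta$ for $x\in(a,b)$, one obtains
$$\|f\|_{\Csp}^p=|\psi_\eta(G(a))|^p\int_c^a w^p+\int_a^b\Big(w(x)\Big|\int_x^b f\Big|\Big)^p\,dx\le\eta^p\int_{(c,b]}w^p,$$
where $\int_{(c,b]}w^p<\infty$ since $b\in I$ and $w^p\in L^1_{\loc,(2,2)}$. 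Choosing $\eta:=\varepsilon\big(1+\int_{(c,b]}w^p\big)^{-1/p}$ forces $\|f\|_{\Csp}<\varepsilon$. For $\nu=1$ I would run the same construction with $G$ replaced by $x\mapsto\int_a^x g$, so that $\int_{I_{1,x}}f=\psi_\eta(\int_a^x g)$ on $[a,b]$ and $=\int_a^b f$ for $x\ge b$; the estimate then becomes $\|f\|_{\Csp}^p\le\eta^p\int_{[a,d)}w^p$, finite because $w^p\in L^1_{\loc,(1,2)}$.

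The only genuinely delicate point is the sign construction itself: alternating $\pm1$ on equal subintervals does \emph{not} control the entire family $\{\int_x^b f\}_{x\in[a,b]}$ for an arbitrary $h\in L^1$. The composition $\psi_\eta\circ G$ works precisely because the absolute continuity of $G$ lets the chain rule turn the Lipschitz oscillation of $\psi_\eta$ into an admissible $\pm1$-valued multiplier with all partial integrals trapped in $[-\eta,\eta]$; once that is in hand, the remaining steps are the elementary weighted-$L^p$ computation above, relying only on the local integrability of $w^p$ recorded in \eqref{polozh}.
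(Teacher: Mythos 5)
Your argument is correct in substance and rests on the same cancellation mechanism as the paper's proof, but you implement it differently, so a comparison is in order. First, a cosmetic slip: you define $f:=\sigma h$ but then use $-(\psi_\eta'\circ G)\,g=-f$, which requires $f=\sigma g=\sigma|h|$; since the lemma only asks for $|f|=|h|$, simply set $f:=\sigma|h|$ (or assume $h\ge 0$ from the outset). With that fixed, the identity $\int_x^bf=\psi_\eta(G(x))$ does hold: the corner set of $\psi_\eta$ is countable, $G'=-g=0$ $\leb$-a.e.\ on its preimage (Serrin--Varberg), so the a.e.\ chain rule for a Lipschitz function composed with an absolutely continuous one applies, and the remaining weighted-$L^p$ estimates are routine consequences of $w^p\in L^1_{\loc,(\nu,2)}$. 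The paper's proof is the discrete, more elementary incarnation of the same idea: it partitions $[a,b]$ into $n$ subintervals of equal $|h|$-mass, splits each at its mass midpoint, and assigns signs $+,-$; each block then integrates to zero, so every tail integral $\int_{I_{\nu,x}}f$ reduces to the contribution of a single block and is at most $\frac{1}{n}\int_a^b|h|$ in absolute value, giving $\|f\|_{\Csp}^p\le n^{-p}\bigl(\int_a^b|h|\bigr)^p\int_a^bw^p$. This contradicts your closing remark that sign alternation on subintervals cannot control the whole family $\bigl\{\int_x^bf\bigr\}$: it cannot for subintervals of equal \emph{length}, but it can for subintervals of equal $|h|$-\emph{mass}, which is exactly the sampled version of your $\psi_\eta\circ G$. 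Your continuous construction buys nothing extra here and costs the measure-theoretic chain rule; it also has total integral $\int_a^bf=\psi_\eta(G(a))$ possibly nonzero, so your bound involves $\int_{(c,b]}w^p$ (resp.\ $\int_{[a,d)}w^p$ for $\nu=1$) rather than only $\int_a^bw^p$ --- still finite under \eqref{polozh}, but the paper's $f$ localizes the estimate entirely to $[a,b]$.
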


\begin{proof} Fix an arbitrary $\varepsilon>0$. Choose  $n>\frac{1}{\varepsilon}\int_a^b|h|\left[\int_a^b w^p\right]^\frac{1}{p}$. Let $\{a_i\}_{i=0}^{n}$ be partition of $[a,b]$ such that $\int_{a_i}^{a_{i+1}}|h|=\frac{1}{n}\int_a^b|h|$. For each $i\in\{0,\ldots,n-1\}$ we choose
 $b_i\in[a_i,a_{i+1}]$ with property $\int_{a_i}^{b_i}|h|=\int_{b_i}^{a_{i+1}}|h|$.
 Put
 $f:=|h|\sum_{i=0}^{n-1}(\chi_{[a_i,b_i]}-\chi_{(b_i,a_{i+1})})$. Then $f\in \Ces$, $|f|=|h|$ on $(a,b)$ and 
 \begin{align*}
 &\|f\|_{\Csp}^p=\int_a^bw(x)^p\left|\int_{I_x} f\right|^p\,dx=\sum_{i=0}^{n-1}\int_{a_i}^{a_{i+1}}w(x)^p\left|\int_{I_{\nu,x}\cap [a_i,a_{i+1}]} f\right|^p\,dx\\
 &\le\sum_{i=0}^{n-1}\left[\int_{a_i}^{a_{i+1}}|h|\right]^p \int_{a_i}^{a_{i+1}}w^p=\frac{1}{n^p}\left[\int_a^b|h|\right]^p\int_a^b w^p<\varepsilon^p.
 \end{align*}
 \end{proof}

\begin{cor} Let \eqref{polozh} hold, $g\in\mathfrak{M}(I)$. If ${\cal L}^1(\{x\in I:g(x)\not=0\})>0$, then $\mathbf{J}_{\Csp}(g)=\infty$.
\end{cor}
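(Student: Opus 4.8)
The plan is to show that if $g$ is nonzero on a set of positive measure, then one can find functions $f \in \Csp$ with small $\Csp$-norm but with $\int_I |f g|$ bounded below, forcing $\mathbf{J}_{\Csp}(g) = \infty$. The key tool is Lemma~\ref{chasto}: given any compact interval $[a,b] \subset I$ and any $h \in L^1([a,b])$, it produces $f \in \Ces$ with $|f| = |h|$ on $(a,b)$, $\supp f \subset [a,b]$, and $\|f\|_{\Csp}$ as small as we wish. Since $\int_I |fg| = \int_a^b |h|\,|g|$ does not depend on the choice of sign pattern in $f$, this gives a family of test functions whose $\Csp$-norm tends to $0$ while the pairing $\int_I |fg|$ stays fixed and positive.

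First I would reduce to finding a single convenient interval. Since ${\cal L}^1(\{g \neq 0\}) > 0$, there is some $\delta > 0$ with ${\cal L}^1(\{x \in I : |g(x)| \geq \delta\}) > 0$; intersecting with an exhaustion of $I$ by compact intervals, we may pick $[a,b] \subset I$ with $m := {\cal L}^1(E) > 0$ where $E := \{x \in [a,b] : |g(x)| \geq \delta\}$. Now take $h := \chi_E$, so $h \in L^1([a,b])$ and $\int_a^b |h|\,|g| = \int_E |g| \geq \delta m > 0$. For each $\varepsilon > 0$, Lemma~\ref{chasto} supplies $f_\varepsilon \in \Ces \subset \Csp$ with $|f_\varepsilon| = \chi_E$ on $(a,b)$, $\supp f_\varepsilon \subset [a,b]$, and $\|f_\varepsilon\|_{\Csp} < \varepsilon$. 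Then
$$
\mathbf{J}_{\Csp}(g) \geq \frac{\int_I |f_\varepsilon g|}{\|f_\varepsilon\|_{\Csp}} = \frac{\int_E |g|}{\|f_\varepsilon\|_{\Csp}} > \frac{\delta m}{\varepsilon}.
$$
Letting $\varepsilon \to 0+$ gives $\mathbf{J}_{\Csp}(g) = \infty$.

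There is essentially no obstacle here; the only point requiring a little care is checking that $f_\varepsilon$ is genuinely admissible in the supremum defining $\mathbf{J}_{\Csp}(g)$, i.e. that $f_\varepsilon \in \Csp$ and $f_\varepsilon \neq 0$ (with $\|f_\varepsilon\|_{\Csp} > 0$ so the quotient makes sense), and that $\int_I |f_\varepsilon g|$ is finite --- all of which follow because $f_\varepsilon$ is bounded with compact support in $I$ and equals $\chi_E$ in modulus on $(a,b)$, while $g$ is integrable over $[a,b]$ (it is, on the set $E$ it is at least $\delta$ and $m > 0$, and off $E$ within $[a,b]$ the product vanishes). Since $\|f_\varepsilon\|_{\Csp} < \varepsilon$ can be made arbitrarily small but $\int_E |g| > 0$ is fixed, the ratio is unbounded, completing the proof.
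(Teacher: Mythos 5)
Your proof is correct and follows essentially the same route as the paper: locate a compact interval on which $g$ is nontrivial and invoke Lemma~\ref{chasto} to produce test functions with arbitrarily small $\Csp$-seminorm but a fixed positive value of $\int_I|fg|$. The only cosmetic difference is that you take $h=\chi_E$ with $E=\{|g|\ge\delta\}$ rather than $|f|=1$ on $(a,b)$ as in the paper (which also sidesteps your unnecessary worry about integrability of $g$, since an infinite numerator would only help).
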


\begin{proof}
 Fix $g\in\mathfrak{M}(I)$ with ${\cal L}^1(\{x\in I:g(x)\not=0\})>0$ and arbitrary $\varepsilon>0$. There exists  $[a,b]\subset I$ such that $a<b$ and ${\cal L}^1((a,b)\cap \{x\in I:g(x)\not=0\})>0$.  By Lemma~\ref{chasto} there exists $f\in \Csp$ with the properties: $\|f\|_{\Csp}<\varepsilon$ and $|f|=1$ on $(a,b)$. Hence 
 $$\frac{\int_I|fg|}{\|f\|_{\Csp}}\ge \frac{1}{\varepsilon}\int_a^b |g|.$$
 Thus, $\mathbf{J}_{\Csp}(g)=\infty$.
 \end{proof}

 \begin{lem}\label{neprg} Let \eqref{polozh} hold, $g\in\mathfrak{M}(I)$ is  continuous on $I$ and  $J_{\Ces}(g)<\infty$. If $\nu=1$ then $g(d-0)=0$, if $\nu=2$ then $g(c+0)=0$. 
\end{lem}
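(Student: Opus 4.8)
The plan is to argue by contradiction. Suppose $\nu = 1$ but $g(d-0) \neq 0$ (the case $\nu = 2$ being symmetric, swapping the roles of the endpoints). By continuity of $g$, there exist $\delta > 0$ and $b \in I$ with $b < d$ such that $|g(x)| \geq \delta$ for all $x \in [b, d)$. The idea is to exhibit a family of test functions $h$ supported near the right endpoint with $\|h\|_{\Ces}$ controlled but $|\int_I hg|$ large, contradicting $J_{\Ces}(g) < \infty$.

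The natural candidates are the functions built in Lemma~\ref{chasto}: for a subinterval $[a, b'] \subset [b, d)$ and a chosen $L^1$ profile, Lemma~\ref{chasto} produces $f \in \Ces$ with $\supp f \subset [a, b']$, $|f| = |h|$ on $(a, b')$, and arbitrarily small $\|f\|_{\Csp}$. However, for the present lemma we need control of $\|f\|_{\Ces}$ (the ideal norm with the absolute value inside), not $\|f\|_{\Csp}$, and we need $\int_I fg$ itself (not $\int_I |fg|$) to be large. So the first step is to adapt that construction: take $h \equiv 1$ on $[a, b']$, keep the ``balanced'' sign pattern $f = \sum_i (\chi_{[a_i, b_i]} - \chi_{(b_i, a_{i+1})})$, and observe that on each subinterval $[a_i, a_{i+1}]$ we have $\int_{I_{1,x}} |f| \le a_{i+1} - a_i$ for $\nu = 1$, so that
$$\|f\|_{\Ces}^p = \int_{[a,b']} \Big(w(x)\int_{I_{1,x}} |f|\Big)^p dx \le \sum_i (a_{i+1}-a_i)^p \int_{a_i}^{a_{i+1}} w^p,$$
which can be made small by taking a fine enough partition (using $w^p \in L^1_{\loc}$ and the finiteness of $\int_{[a,b']} w^p$). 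Meanwhile $\int_I fg = \int_a^{b'} f g$, and since $|f| = 1$ while $g$ does not change sign by much... actually $g$ need not have constant sign, so instead I would choose the sign pattern of $f$ to match $\sgn g$ on $[a, b']$, giving $\int_I fg = \int_a^{b'} |g| \ge \delta (b' - a)$. The subtlety is that aligning $f$ with $\sgn g$ may conflict with keeping $\|f\|_{\Ces}$ small, since then $\int_{I_{1,x}} |f|$ can be as large as $x - a$; but on a short interval $[a, b'] \subset [b, d)$ of length $\eta$ this gives $\|f\|_{\Ces}^p \le \eta^p \int_a^{b'} w^p$, while $\int_I fg \ge \delta\eta$, so the ratio $|\int_I fg| / \|f\|_{\Ces} \ge \delta\eta / (\eta (\int_a^{b'} w^p)^{1/p}) = \delta (\int_a^{b'} w^p)^{-1/p}$.

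This ratio blows up precisely when $\int_a^{b'} w^p \to 0$, i.e. when we can find short intervals near $d$ on which $\int w^p$ is arbitrarily small. This is where assumption \eqref{polozh} enters through $w^p \in L^1_{\loc,(\nu,2)}$ with $\nu = 1$: this says $\int_{I_{1,x}^*} w^p = \int_x^d w^p < \infty$ for all $x$, hence $\int_b^d w^p < \infty$, and therefore $\int_a^b w^p \to 0$ as $a \uparrow b$ — wait, that only shrinks the left end. The cleaner route: since $\int_b^d w^p < \infty$, we have $\int_a^d w^p \to 0$ as $a \uparrow d$, so picking $a_j \uparrow d$ and $b' = d$ (taking the test interval to be $[a_j, d)$ itself, with $f$ aligned to $\sgn g$ there and length $\eta_j = d - a_j$), we get $\|f_j\|_{\Ces}^p \le \eta_j^p \int_{a_j}^d w^p$ and $\int_I f_j g \ge \delta \eta_j$, so $J_{\Ces}(g) \ge \delta \eta_j / (\eta_j (\int_{a_j}^d w^p)^{1/p}) \to \infty$, the contradiction.

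The main obstacle, and the point deserving care, is the interplay between the sign of the test function and the two different norms: Lemma~\ref{chasto}'s construction is engineered to kill $\|f\|_{\Csp}$ by oscillating signs, whereas here I must \emph{not} oscillate (I need $\int fg$ large with $g$ of essentially one sign), and instead rely on the interval being short so that the non-absolute Cesàro operator $\int_{I_{1,x}}|f|$ stays small simply by the smallness of the integration domain near $d$. I would also double-check the boundary-value direction: for $\nu = 1$ the operator integrates over $(c, x]$, so a function supported near $d$ is ``seen'' with its full mass for all $x$ near $d$, which is exactly why the relevant endpoint is $d$; for $\nu = 2$ the operator integrates over $[x, d)$ and the symmetric argument forces $g(c+0) = 0$. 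Finally one should confirm $f_j \in \Ces$ (immediate, as $f_j$ is bounded with compact support in $I$ and $w^p$ is locally integrable), so that $f_j$ is a legitimate competitor in the supremum defining $J_{\Ces}(g)$.
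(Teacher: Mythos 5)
Your core quantitative idea is sound and close in spirit to the paper's: test $g$ against sign-aligned indicator profiles supported near the relevant endpoint and exploit that $\int_a^d w^p\to0$ as $a\to d-$, which is exactly what $w^p\in L^1_{\loc,(\nu,2)}$ supplies. But there is a genuine logical gap at the very first step. The lemma asserts $g(d-0)=0$, i.e.\ that the one-sided \emph{limit exists} and equals zero; the negation of this is $\limsup_{x\to d-}|g(x)|>0$, \emph{not} the existence of a one-sided neighbourhood $[b,d)$ on which $|g|\ge\delta$. Your sentence ``by continuity of $g$ there exist $\delta>0$ and $b$ with $|g|\ge\delta$ on $[b,d)$'' is false for a continuous $g$ that oscillates near $d$ (e.g.\ $g(x)=\sin\frac{1}{d-x}$ on a bounded $I$): for such a $g$ the conclusion of the lemma fails, yet no such $b,\delta$ exist, so your argument yields no contradiction. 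As written you only prove $\liminf_{x\to d-}|g(x)|=0$. The repair is small and lands essentially on the paper's proof: from $\limsup_{x\to d-}|g(x)|>\delta$ extract disjoint intervals $[\alpha_j,\beta_j]$ accumulating at $d$ on which $|g|>\delta$ and $\sgn g$ is constant, and run your single-interval estimate on each, giving $\|\sgn g\,\chi_{[\alpha_j,\beta_j]}\|_{\Ces}^p\le(\beta_j-\alpha_j)^p\int_{\alpha_j}^d w^p$ against $\int_I fg\ge\delta(\beta_j-\alpha_j)$, so the ratio is at least $\delta\,(\int_{\alpha_j}^d w^p)^{-1/p}\to\infty$. (The paper packages the same intervals into a single test function $f=\sum_j\gamma^{-j}(\beta_j-\alpha_j)^{-1}\sgn g\,\chi_{[\alpha_j,\beta_j]}$ with $\|f\|_{\Ces}<\infty$ and $\int_I fg=\infty$; either packaging works.)

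Two smaller defects. You take the final test interval to be $[a_j,d)$ of ``length $\eta_j=d-a_j$''; since $d=+\infty$ is allowed (the classical case $I=(0,\infty)$ is the motivating example), $\eta_j$ may be infinite and the bound $\eta_j^p\int_{a_j}^d w^p$ is then vacuous, so you must use a bounded subinterval as above. Also, your first display writes $\|f\|_{\Ces}^p$ as an integral over $[a,b']$ only and bounds $\int_{I_{1,x}}|f|$ by $a_{i+1}-a_i$ on $[a_i,a_{i+1}]$; for $\nu=1$ the correct value is $\int_c^x|f|=x-a$, and the tail $\int_{b'}^d w(x)^p[\int_c^x|f|]^p\,dx$ does not vanish for the balanced construction (only $\int_a^{b'}f$ telescopes to zero, not $\int_a^{b'}|f|$). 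This occurs in the preliminary discussion you later discard, but it should not stand as an equality.
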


 \begin{proof}
The case $\nu=2$. Assume that $\delta_0:=\limsup_{s\to c+0}|g(s)|>0$. Let $\delta\in (0,\delta_0)$ and $E_\delta:=\{x\in I: |g(x)|>\delta\}$.
Denote $N:=\sup\{k\in\Ze: 2^k\le\int_I w^p\}+1$ and define $\{a_k\}_{k<N}$ by equality $\int_{I_{\nu,a_k}^*} w^p=2^k$. Let  $k_0<\min\{N,0\}$ and $A:=\{j\le k_0:[a_{j-1},a_j]\cap E_\delta\not=\emptyset\}$. Then the set $A$ is infinite and  for any $j\in A$ there exists $[\alpha_j,\beta_j]\subset [a_{j-1},a_j]\cap E_\delta$ such that $\alpha_j<\beta_j$ and $\sgn g=const$ on $[\alpha_j,\beta_j]$. We take $\gamma\in(1,2^\frac{1}{p})$ and put 
$$f(x):=\sum_{j\in A}\frac{\gamma^{-j}}{\beta_j-\alpha_j}\sgn g(x)\chi_{[\alpha_j,\beta_j]}(x),\ x\in I.$$
Then
\begin{align*}
&\|f\|_{\Ces}^p\le \sum_{k\le k_0}\int_{a_{k-1}}^{a_k}w^p\left[\int_{a_{k-1}}^{a_{k_0}}|f|\right]^p\le \sum_{k\le k_0}2^k\left[\sum_{j=k}^{k_0}\int_{a_{j-1}}^{a_j}|f|\right]^p\\
&\le \sum_{k\le k_0}2^k\left[\sum_{j=k}^0\gamma^{-j}\right]^p
\le (\gamma-1)^{-p}\sum_{k\le 0}2^k [\gamma^{-k+1}-1]^p<\infty
\end{align*}
and
$$\int_I gf=\int_I|gf|\ge \delta \sum_{j\in A}\gamma^{-j}=\infty.$$
Thus we get  contradiction with $J_{\Ces}(g)<\infty$.
 \end{proof}

 \section{Characterization of dual spaces for $\Csp$}

 \begin{lem}\label{neobkh}
  Let \eqref{polozh} hold,  $g\in \mathfrak{M}(I)$ and $J_{\Csp}(g;C_0^\infty(I))<\infty$.
  Then  $g$ admits  a representative $\tilde g\in AC_\loc(I)$,  the function $D\tilde g$ belongs $L^{p'}_\frac{1}{w}(I)$, and $J_{\Csp}(g;C_0^\infty(I))\ge \|D\tilde g\|_{L^{p'}_\frac{1}{w}(I)}$.
 \end{lem}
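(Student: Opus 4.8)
The plan is to test the functional $f\mapsto\int_I fg$ against a convenient family of bump functions in $C_0^\infty(I)$ in order to extract, first, local absolute continuity of $g$, and then the quantitative bound on $D\tilde g$.

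First I would fix a compact subinterval $[a,b]\subset I$ and work with test functions supported there. The key observation is that for $\varphi\in C_0^\infty(I)$ with $\supp\varphi\subset[a,b]$, the inner integral $\int_{I_{\nu,x}}\varphi$ is itself a $C^\infty$ function of $x$, compactly supported away from the endpoints on which $g$ would act; integrating by parts (in the sense of Fubini on a double integral) moves the integral $\int_I fg$ into the form $\int_I \left(\int_{I_{\nu,x}}\varphi\right)\psi$ for a suitable primitive $\psi$ of $g$, or dually $\int_I \varphi\cdot G$ where $G$ is (up to sign, depending on $\nu$) the function $x\mapsto\int_{I_{\nu,x}^*}g$ or a primitive thereof. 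Concretely, for $\nu=2$ one writes $\int_c^x\varphi$ only over a compact set, and Fubini gives $\int_I g(t)\!\left(\int_{I_{2,t}}\varphi\right)dt=\int_I\varphi(x)\!\left(\int_{I_{2,x}^*}g\right)dx$ once one checks $g\in L^1_{\loc}$ on the relevant half-lines — which follows because otherwise $J_{\Csp}(g;C_0^\infty)$ would already be infinite by testing against nonnegative bumps (as in the Corollary after Lemma~\ref{chasto}). So the primitive-type object $\Phi(x):=\int_{I_{\nu,x}^*}g$ is well defined, absolutely continuous on compacts, with $D\Phi=\pm g$ a.e.; this gives the representative $\tilde g\in AC_\loc(I)$ after adjusting orientation — more precisely $\tilde g$ is recovered as a primitive so that $D\tilde g$ is the a.e.-defined derivative, and one identifies $\tilde g$ with $\pm\Phi$ up to the structure of the problem. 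Let me restate: the right move is to show $g$ equals a.e. the derivative of an $AC_\loc$ function; testing $\int_I fg=\int_I f'\!\cdot(\text{primitive})$ type identities against $f\in C_0^\infty$ forces $g=D\tilde g$ with $\tilde g\in AC_\loc(I)$ in the distributional sense, which for a locally integrable $g$ is automatic — the real content is the norm bound.

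For the quantitative estimate, I would dualize. Given $\tilde g\in AC_\loc(I)$, I want $\|D\tilde g\|_{L^{p'}_{1/w}(I)}\le J_{\Csp}(g;C_0^\infty(I))$. Pick an arbitrary $u\in C_0^\infty(I)$ and set $f:=Du/w^{p}\cdot|\cdots|$ — more carefully, I would choose $f$ so that $w(x)\int_{I_{\nu,x}}f = \operatorname{sgn}(D\tilde g(x))\,|D\tilde g(x)/w(x)|^{p'-1}$ truncated to $[a,b]$, i.e.\ prescribe the inner integral $F(x):=\int_{I_{\nu,x}}f$ directly as a smooth compactly supported function $v$ and recover $f=\pm Dv$ (for $\nu=2$, $F(x)=\int_c^x f$ gives $f=F'$; the sign and the role of $I_{\nu,x}$ vs $I_{\nu,x}^*$ dictate the orientation). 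Then $f\in C_0^\infty(I)$, $\|f\|_{\Csp}^p=\int_I|w(x)v(x)|^p\,dx=\int_{[a,b]}|v(x)|^p w(x)^p\,dx$, while by the integration-by-parts identity $\int_I fg=\pm\int_I v(x)\,D\tilde g(x)\,dx$. Optimizing $v$ over smooth functions approximating $v(x)=\operatorname{sgn}(D\tilde g(x))\,(|D\tilde g(x)|/w(x)^{p})^{p'-1}$ — equivalently $|v|^p w^p$ and $v\,D\tilde g$ both scale like $(|D\tilde g|/w)^{p'}$ — yields exactly $\|D\tilde g\|_{L^{p'}_{1/w}([a,b])}\le J_{\Csp}(g;C_0^\infty(I))$. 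Letting $[a,b]\uparrow I$ by monotone convergence gives $D\tilde g\in L^{p'}_{1/w}(I)$ with the claimed bound.

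The main obstacle I anticipate is the bookkeeping around the two cases $\nu=1,2$ and the correct primitive: one must verify that $x\mapsto\int_{I_{\nu,x}}f$ really is a smooth compactly supported function when $f\in C_0^\infty(I)$ (it is constant near one endpoint and $0$ near the other, so it is \emph{not} compactly supported in general — but it \emph{is} smooth, and its product with $g$ is integrable precisely because of the a priori finiteness hypothesis and the decay of $g$ near the "bad" endpoint forced by Lemma~\ref{neprg}-type reasoning). Thus a careful splitting of $I$ into a compact core plus two tails, controlling the tail contributions via the hypothesis $J_{\Csp}(g;C_0^\infty(I))<\infty$, is the delicate part; once the identity $\int_I fg=\pm\int_I (\int_{I_{\nu,x}}f)\,D\tilde g\,dx$ is justified for all $f\in C_0^\infty(I)$, the extremal-function computation is routine Hölder duality.
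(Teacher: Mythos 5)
There is a genuine gap in the qualitative half of your argument. The lemma asserts that $g$ \emph{itself} is a.e.\ equal to a function $\tilde g\in AC_\loc(I)$ whose derivative lies in $L^{p'}_{\frac{1}{w}}(I)$; it does not assert that $g$ is the derivative of some $AC_\loc$ function. You conflate the two when you write that the claim ``for a locally integrable $g$ is automatic'' and when you identify $\tilde g$ with the primitive $\Phi(x)=\int_{I_{\nu,x}^*}g$: what is automatic for $g\in L^1_\loc$ is that its primitive is $AC_\loc$ with derivative $g$, which is a different (and here useless) statement, since a generic $L^1_\loc$ function admits no absolutely continuous representative at all. The whole content of the lemma is that the hypothesis forces $g$ to have a \emph{weak derivative that is a function in} $L^{p'}_{\frac{1}{w}}(I)$. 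The paper obtains this by observing that for $f=D\phi$, $\phi\in C_0^\infty(I)$, one has $\int_{I_{\nu,x}}f=\pm\phi(x)$ and hence $\|f\|_{\Csp}=\|w\phi\|_{L^p(I)}$, so the hypothesis bounds the functional $\phi\mapsto\int_I g\,D\phi$ on $C_0^\infty(I)\subset L^p_w(I)$; Hahn--Banach plus the Riesz representation then produce $v\in L^{p'}_{\frac{1}{w}}(I)$ with $\int_I g\,D\phi=-\int_I v\phi$ for all $\phi$, i.e.\ $v$ is the weak derivative of $g$, whence $g\in W^{1,1}_{\loc}(I)$ and the $AC_\loc$ representative exists by the standard theorem. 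This step is entirely absent from your proposal.

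The gap also undermines your quantitative half as written: the identity $\int_I fg=\pm\int_I v\,D\tilde g$ with $f=\pm Dv$ is exactly integration by parts against the weak derivative of $g$, so it presupposes the conclusion you skipped. Once that is supplied, your estimate is essentially the paper's: with $f=D\phi$ the ratio $|\int_I fg|/\|f\|_{\Csp}$ becomes $|\int_I \phi\,D\tilde g|/\|w\phi\|_{L^p(I)}$, and taking the supremum over $\phi$ gives $\|D\tilde g\|_{L^{p'}_{\frac{1}{w}}(I)}$ by $L^p_w$--$L^{p'}_{\frac{1}{w}}$ duality (you do this by explicit near-extremizers on compact subintervals, which is fine). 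Note also that the complication you anticipate --- that $x\mapsto\int_{I_{\nu,x}}f$ is only eventually constant, not compactly supported --- disappears entirely if you restrict to the subfamily $f=D\phi$, for which this function equals $\pm\phi$; since only a lower bound on $J_{\Csp}(g;C_0^\infty(I))$ is needed, restricting the supremum to that subfamily costs nothing.
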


 \begin{proof}  Fix an arbitrary $\phi\in C_0^\infty(I)$. Put 
 $f:=D\phi$.  Then $f\in C_0^\infty(I)$ and $\|f\|_{\Csp}=\|w\phi\|_{L^p(I)}<\infty$.  Hence 
 \begin{align*}
M:=\sup_{\phi\in C_0^\infty(I)}\frac{|\int_I g D\phi|}{\|w\phi\|_{L^p(I)}}\le J_{\Csp}(g;C_0^\infty(I))<\infty.  
 \end{align*}
 Put $\Lambda \phi:=\int_I g D\phi$, $\phi\in C_0^\infty(I)$. By the Hahn\,--\,\,Banach theorem there exists an extension $\tilde \Lambda\in (L^p_w(I))^*$ of the functional $\Lambda$ with $\|\Lambda\|_{(L^p_w(I))^*}\le M$. By the Riesz theorem there exists $v\in L^{p'}_\frac{1}{w}(I)$ such that $\tilde \Lambda h=-\int_I vh$, $h\in L^p_w(I)$. Hence $v\in L^1_\loc(I)$ and
 $$-\int_I v\phi=\int_I g D\phi,\ \ \phi\in C_0^\infty(I).$$
 Therefore $v$ is a weak derivative of the function $g$. In particular, (see \cite[Theorem 7.13]{Leoni}), the function $g$ admits  a representative $\tilde g\in AC_\loc(I)$,
 $D\tilde g\in L^{p'}_\frac{1}{w}(I)$
 and 
 $$J_{\Csp}(g;C_0^\infty(I))\ge \sup_{\phi\in C_0^\infty(I)}\frac{|\int_I \phi D\tilde g|}{\|w\phi\|_{L^p(I)}}=\|D\tilde g\|_{L^{p'}_\frac{1}{w}(I)}.$$
\end{proof}

\begin{theo}\label{cesp}
Let \eqref{polozh} hold, $g\in \mathfrak{M}(I)$. Then $J_{\Csp}(g;\Ces)<\infty$ if and only if 
$g$ admits  a representative $\tilde g\in AC_\loc(I)$ of the form $\tilde g(x)=(-1)^\nu\int_{I_{\nu,x}^*} D\tilde g$, $x\in I$, and $D\tilde g\in L^{p'}_\frac{1}{w}(I)$. In this case, 
$J_{\Csp}(g;\Ces)=\|D\tilde g\|_{L^{p'}_\frac{1}{w}(I)}$.
\end{theo}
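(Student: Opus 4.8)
The plan is to prove the theorem by combining Lemma~\ref{neobkh} (which handles the ``necessity'' direction on the dense subclass $C_0^\infty(I)$) with the density statement Corollary~\ref{dences} and an explicit computation of the functional on characteristic-type functions in $\Ces$. First I would treat the \emph{sufficiency} direction: assume $g$ has a representative $\tilde g\in AC_\loc(I)$ of the stated form with $D\tilde g\in L^{p'}_\frac1w(I)$. For $f\in\Ces$, I would compute $\int_I \tilde g f$ by substituting $\tilde g(x)=(-1)^\nu\int_{I_{\nu,x}^*}D\tilde g$ and applying Fubini's theorem to swap the order of integration; this converts $\int_I \tilde g f$ into $\pm\int_I D\tilde g(t)\bigl(\int_{I_{\nu,t}}f\bigr)\,dt$ (the sign $(-1)^\nu$ and the structure of $I_{\nu,x}$, $I_{\nu,x}^*$ are arranged precisely so the iterated integral collapses correctly). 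One must check the Fubini application is legitimate: the double integral of $|D\tilde g(t)f(s)|$ over the relevant triangular region is finite because $D\tilde g\in L^{p'}_\frac1w$ and $x\mapsto w(x)\int_{I_{\nu,x}}|f|$ lies in $L^p$ when $f\in\Ces$ — here I would note that $f\in\Ces$ controls $\int_{I_{\nu,x}}|f|$, not merely $\int_{I_{\nu,x}}f$. After the swap, Hölder's inequality with exponents $p,p'$ against the weights $w$ and $\frac1w$ gives $\bigl|\int_I \tilde g f\bigr|\le \|D\tilde g\|_{L^{p'}_\frac1w(I)}\,\|f\|_{\Csp}$, which yields $J_{\Csp}(g;\Ces)\le\|D\tilde g\|_{L^{p'}_\frac1w(I)}$.

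Next I would prove the \emph{necessity} direction together with the reverse norm inequality. Suppose $J_{\Csp}(g;\Ces)<\infty$. Since $C_0^\infty(I)\subset\Ces$, we get $J_{\Csp}(g;C_0^\infty(I))\le J_{\Csp}(g;\Ces)<\infty$, so Lemma~\ref{neobkh} applies: $g$ has a representative $\tilde g\in AC_\loc(I)$ with $D\tilde g\in L^{p'}_\frac1w(I)$ and $\|D\tilde g\|_{L^{p'}_\frac1w(I)}\le J_{\Csp}(g;C_0^\infty(I))\le J_{\Csp}(g;\Ces)$. Combined with the sufficiency computation above (which needs only $D\tilde g\in L^{p'}_\frac1w$ and the integral representation of $\tilde g$), this already pins down the norm equality $J_{\Csp}(g;\Ces)=\|D\tilde g\|_{L^{p'}_\frac1w(I)}$ \emph{provided} we establish that $\tilde g$ does have the special form $\tilde g(x)=(-1)^\nu\int_{I_{\nu,x}^*}D\tilde g$. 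That integral-representation claim is the real content remaining: a priori $\tilde g$ is only $AC_\loc$, so $\tilde g(x)=\tilde g(x_0)+\int_{x_0}^x D\tilde g$ for some base point, and one must show the ``constant of integration'' is forced to be zero and that $\int_{I_{\nu,x}^*}D\tilde g$ converges.

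To nail down the form of $\tilde g$, I would test the functional against the functions constructed in Lemma~\ref{chasto}. Fix $[a,b]\subset I$ and $h\in L^1([a,b])$; Lemma~\ref{chasto} produces $f\in\Ces$ with $\supp f\subset[a,b]$, $|f|=|h|$ on $(a,b)$, and $\|f\|_{\Csp}$ arbitrarily small. Then $\bigl|\int_a^b \tilde g f\bigr|\le J_{\Csp}(g;\Ces)\,\|f\|_{\Csp}$ can be made arbitrarily small; taking a supremum over sign patterns of $f$ subject to $\int_{I_{\nu,x}}f$ being small shows $\int_a^b \tilde g f = 0$ whenever $f$ has mean zero on each dyadic-type subinterval — more precisely, whenever $\int_a^b f=0$ one can arrange $\|f\|_{\Csp}$ small, forcing $\int_a^b\tilde g f=0$. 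Since this holds for all mean-zero $f$ supported in $[a,b]$, $\tilde g$ is a.e.\ constant on $[a,b]$; but $[a,b]\subset I$ was arbitrary, so $\tilde g$ is a.e.\ equal to a global constant on $I$ — wait, that would force $D\tilde g=0$. The correct reading is subtler: Lemma~\ref{chasto} makes $\|f\|_{\Csp}$ small only when $n$ is large, i.e.\ for \emph{highly oscillating} $f$, so the conclusion is that $\tilde g$ has vanishing oscillation against high-frequency mean-zero packets, which is automatic for $AC_\loc$ functions and gives no new constraint. Instead, the vanishing of the integration constant must come from a limiting argument at the endpoint: using a sequence $f_n\in\Ces$ approximating the ``tail'' $\chi_{I_{\nu,x}}$ appropriately (as in Corollary~\ref{dences}) and the finiteness of $J_{\Csp}(g;\Ces)$, one forces $\lim \tilde g$ at the relevant endpoint ($d$ if $\nu=1$, $c$ if $\nu=2$) to be $0$; this is the analogue of Lemma~\ref{neprg} and is where the hypothesis $w^p\in L^1_{\loc,(\nu,2)}$ enters. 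Once $\tilde g\to 0$ at that endpoint and $D\tilde g\in L^{p'}_\frac1w\subset L^1_\loc$, the fundamental theorem of calculus gives $\tilde g(x)=-\int_x^{(\text{endpoint})} D\tilde g = (-1)^\nu\int_{I_{\nu,x}^*}D\tilde g$, completing the proof.

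The main obstacle I anticipate is exactly this last point: showing the boundary value of $\tilde g$ vanishes and that $\int_{I_{\nu,x}^*}D\tilde g$ is a genuinely convergent (improper) integral defining $\tilde g$. One cannot simply quote Lemma~\ref{neprg}, since that lemma assumes $g$ continuous and bounds $J_{\Ces}$, not $J_{\Csp}$; here $\tilde g$ is only $AC_\loc$ and a priori may blow up at the endpoint. The right tool is to feed into the functional the truncations $f_n=f\chi_{[\alpha_n,\beta_n]}$ from Corollary~\ref{dences} for a well-chosen $f$ — or directly an approximation of a step function whose $\Csp$-norm stays bounded while its pairing with $\tilde g$ would diverge unless $\tilde g$ decays — and derive a contradiction with $J_{\Csp}(g;\Ces)<\infty$. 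Carrying out this endpoint analysis carefully, keeping track of the two cases $\nu=1$ and $\nu=2$ symmetrically, is the technical heart of the argument; the Fubini/Hölder computation for sufficiency and the norm identity are then routine.
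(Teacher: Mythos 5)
Your overall architecture matches the paper's (Lemma~\ref{neobkh} for necessity plus the lower norm bound; a Fubini/H\"older computation for sufficiency plus the upper bound), and your sufficiency half is sound --- indeed your direct Fubini argument, justified by the absolute convergence $\int_I\int|D\tilde g(t)f(s)|\le\|D\tilde g\|_{L^{p'}_{1/w}(I)}\|f\|_{\Ces}<\infty$ followed by H\"older against $\|f\|_{\Csp}$, is a slightly more streamlined route than the paper's truncate-and-integrate-by-parts argument. The problem is in the necessity half, at precisely the point you yourself flag as ``the technical heart'': you never actually prove that the integration constant vanishes, i.e.\ that $\tilde g\to0$ at the endpoint $d$ (case $\nu=1$) or $c$ (case $\nu=2$). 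You sketch two routes: the first (via Lemma~\ref{chasto} and mean-zero packets) you correctly recognize as a dead end, and the second (feeding step-function approximations into the functional) is left entirely as a plan. A proof proposal that defers its hardest step to ``carrying out this endpoint analysis carefully'' has a genuine gap.

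Moreover, your stated reason for not simply quoting Lemma~\ref{neprg} is wrong on both counts, and closing the gap is in fact immediate. First, $\tilde g\in AC_\loc(I)$ \emph{is} continuous on $I$, so the continuity hypothesis of Lemma~\ref{neprg} is satisfied; it cannot ``blow up at the endpoint'' in a way that defeats that lemma, since the lemma's conclusion is exactly a statement about the boundary limit of a continuous function on the open interval. Second, the hypothesis $J_{\Ces}(\tilde g)<\infty$ follows at once from $J_{\Csp}(g;\Ces)<\infty$: for every $h\in\Ces$ one has $\|h\|_{\Csp}\le\|h\|_{\Ces}$ (triangle inequality inside the inner integral) and $\int_I h\tilde g=\int_I hg$, whence $J_{\Ces}(\tilde g)\le J_{\Csp}(g;\Ces)$. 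This is the paper's one-line argument, and it is what you are missing. Once $\tilde g(d-0)=0$ (resp.\ $\tilde g(c+0)=0$) is known, writing $\tilde g(x)=\tilde g(s)-\int_x^s D\tilde g$ and letting $s$ tend to the endpoint gives the representation $\tilde g(x)=(-1)^\nu\int_{I_{\nu,x}^*}D\tilde g$; note also that the convergence of this improper integral needs $D\tilde g\in L^1_{\loc,(\nu,2)}$, which follows from H\"older together with $D\tilde g\in L^{p'}_{1/w}(I)$ and $w^p\in L^1_{\loc,(\nu,2)}$ --- your appeal to $L^{p'}_{1/w}\subset L^1_\loc$ only gives integrability on compact subsets of $I$, not up to the endpoint, so this small point also needs the weighted H\"older step.
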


\begin{proof} {\it Necessity.}  By Lemma~\ref{neobkh}  $g$ admits  a representative $\tilde g\in AC_\loc(I)$, and   $D\tilde g\in L^{p'}_\frac{1}{w}(I)$. By  H\"older's inequality $D\tilde g\in   L^1_{\loc,(\nu,2)}$.
 
Further, 
 $$J_{\Ces}(\tilde g)\le  J_{\Csp}(g;\Ces)<\infty.$$
 By Lemma \ref{neprg} we have  $\tilde g(d-0)=0$ if $\nu=1$, or $\tilde g(c+0)=0$ if $\nu=2$. 
 From $\tilde g\in AC_\loc(I)$ we obtain the representation
 $\tilde g(x)=\tilde g(s)-\int_x^s D\tilde g$ for any  $s\in I$. Passing to the limit as  $s\to d-0$ in case $\nu=1$, or   $s\to c+0$ in case $\nu=2$  we get $\tilde g(x)=(-1)^\nu\int_{I_{\nu,x}^*} D\tilde g$, $x\in I$.
 
 Also, by Lemma~\ref{neobkh} we have 
 $$J_{\Csp}(g;\Ces)\ge J_{\Csp}(g;C_0^\infty(I))\ge \|D\tilde g\|_{L^{p'}_\frac{1}{w}(I)}.$$
 
 {\it Sufficiency.} Fix an  arbitrary $f\in \Ces$.  We have
 \begin{align*}
\int_I |fg|&\le \int_I|f(x)|\left[\int_{I_{\nu,x}^*} |D\tilde g|\right]\,dx=\int_I |D\tilde g(x)|\int_{I_{\nu,x}}|f|\,dx\\
&\le \|D\tilde g\|_{L^{p'}_\frac{1}{w}(I)}\|f\|_{\Ces}<\infty.  
 \end{align*}
 Let   $\{\alpha_k\}_1^\infty$, $\{\beta_k\}_1^\infty$ be the sequences existence of which is proved in Lemma \ref{sequence}.  For $n\in\mathbb{N}$ we put $f_n:=f\chi_{[\alpha_n,\,\beta_n]}$. Then $f_n\in \Ces$ and  (see Corollary \ref{dences})
$\lim_{n\to\infty}\|f-f_n\|_{\Csp}=0$.

 By integrating by parts  we obtain 
$$\int_I f_ng=\int_I f_n\tilde g =-\int_I \left[\int_{I_{\nu,x}}f_n\right] D\tilde g(x)\,dx.$$ 
By the dominated convergence  theorem, we have
$\lim_{n\to\infty}\int_I  f_n\tilde g=\int_I f\tilde g$
and 
$$\left|\int_I fg\right|\le \limsup_{n\to\infty}\int_I \left|\int_{I_{\nu,x}}f_n\right| |D\tilde g(x)|\,dx\le\|D\tilde g\|_{L^{p'}_\frac{1}{w}(I)}\|f\|_{\Csp}.$$
Therefore, $J_{\Csp}(g;\Ces)\le \|D\tilde g\|_{L^{p'}_\frac{1}{w}(I)}<\infty$.
  \end{proof}

\noindent{\bf Remark.} The conditions of Theorem~\ref{cesp} are not sufficient for  finiteness $J_{\Csp}(g)$ due to the following reasons. If
 $\int_I\left|f(x)\int_{I_{\nu,x}^*} h\right|\,dx<\infty$ for any $h\in \mathfrak{M}(I)$ with $\|h\|_{L^{p'}_\frac{1}{w}(I)}<\infty$ then by \cite[Lemma 2.4]{PSU0} 
 $$\sup_{h\in \mathfrak{M}(I)}\frac{\int_I\left|f(x)\int_{I_{\nu,x}^*} h\right|\,dx}{\|h\|_{L^{p'}_\frac{1}{w}(I)}}<\infty.$$
However, 
  $$\sup_{h\in \mathfrak{M}(I)}\frac{\int_I\left|f(x)\int_{I_{\nu,x}^*} h\right|\,dx}{\|h\,\frac{1}{w}\|_{L^{p'}(I)}}=\sup_{h\in \mathfrak{M}(I),\,h\ge 0}\frac{\int_I |f(x)|\int_{I_{\nu,x}^*} h\,dx}{\|h\,\frac{1}{w}\|_{L^{p'}(I)}}=\|f\|_{\Ces}.$$
  
\begin{theo}\label{cesshtrih} Let \eqref{polozh} hold,  $g\in \mathfrak{M}(I)$. Then $J_{\Csp}(g)<\infty$ if and only if 
$g$ admits  a representative $\tilde g\in AC_\loc(I)\cap L^\infty(I)$ of the form $\tilde g(x)=(-1)^\nu\chi_{I_{\nu,b}}(x)\int_{I_{\nu,x}^*} D\tilde g$, $x\in I$, where $b\in I$, and $D\tilde g\in L^{p'}_\frac{1}{w}(I)$. In this case, 
$J_{\Csp}(g)=\|D\tilde g\|_{L^{p'}_\frac{1}{w}(I)}$.
  \end{theo}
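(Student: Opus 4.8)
The plan is to compare $J_{\Csp}(g)$ with $J_{\Csp}(g;\Ces)$, using the fact (Corollary~\ref{dences}) that $\Ces$ is dense in $\Csp$. On one hand, density gives immediately that $J_{\Csp}(g)=J_{\Csp}(g;\Ces)$ as soon as the functional $f\mapsto\int_I fg$ is well-defined and continuous on $\Csp$; so the content of the theorem is precisely to identify when this happens, and Theorem~\ref{cesp} already tells us that $J_{\Csp}(g;\Ces)<\infty$ forces $\tilde g(x)=(-1)^\nu\int_{I_{\nu,x}^*}D\tilde g$ with $D\tilde g\in L^{p'}_\frac1w(I)$. The extra conditions in the present statement — boundedness of $\tilde g$ and the truncation $\chi_{I_{\nu,b}}$ — must come from controlling $\int_I fg$ against $\|f\|_{\Csp}$ for \emph{all} $f\in\Csp$, not just those in $\Ces$, which is where the non-ideal nature of $\Csp$ (Lemma~\ref{chasto} and its corollary) bites.

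\textbf{Necessity.} Assume $J_{\Csp}(g)<\infty$. Then in particular $J_{\Csp}(g;\Ces)<\infty$, so by Theorem~\ref{cesp} the representative $\tilde g\in AC_\loc(I)$ exists with $\tilde g(x)=(-1)^\nu\int_{I_{\nu,x}^*}D\tilde g$ and $D\tilde g\in L^{p'}_\frac1w(I)$, and $J_{\Csp}(g;\Ces)=\|D\tilde g\|_{L^{p'}_\frac1w(I)}$. It remains to extract $\tilde g\in L^\infty(I)$ together with the cutoff. For boundedness, suppose $\tilde g$ is unbounded, say (treating $\nu=2$, so $\tilde g(c+0)=0$ and the growth is towards $d$) there are points $x_k\uparrow$ with $|\tilde g(x_k)|\to\infty$; build a test function $f$ supported on small intervals near these $x_k$, with $\sgn f$ chosen to align with $\sgn\tilde g$, normalized so that $\|f\|_{\Csp}$ is controlled (Lemma~\ref{chasto} shows such localized bumps have arbitrarily small $\Csp$-norm) while $\int_I fg\approx\sum|\tilde g(x_k)|\cdot(\text{mass})$ diverges — contradiction. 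For the truncation structure: the representation $\tilde g(x)=(-1)^\nu\int_{I_{\nu,x}^*}D\tilde g$ from Theorem~\ref{cesp} already has $\tilde g\to 0$ at one endpoint; the claim is that $D\tilde g$ must actually vanish near the \emph{other} endpoint, i.e. $D\tilde g=0$ on $I\setminus I_{\nu,b}$ for some $b$, equivalently $\tilde g$ is eventually constant towards $d$ (if $\nu=2$) — but combined with $\tilde g\in L^\infty$ and the ideal-type obstruction, the only admissible constant is... here one must check carefully: actually the form $\tilde g(x)=(-1)^\nu\chi_{I_{\nu,b}}(x)\int_{I_{\nu,x}^*}D\tilde g$ means $\tilde g\equiv 0$ outside $I_{\nu,b}$ and $D\tilde g$ supported in $I_{\nu,b}$; the reason $D\tilde g$ cannot have mass near the far endpoint is that such mass would, via the same bump construction near that endpoint, produce $\int_I fg$ large against small $\|f\|_{\Csp}$ — precisely because near an endpoint where $\int_I w^p$ may be infinite, $f$ supported there is cheap in $\Csp$-norm but pairs nontrivially with the $\int_{I_{\nu,x}^*}D\tilde g$ tail.

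\textbf{Sufficiency.} Conversely, given such a $\tilde g$ with compact "essential support" $I_{\nu,b}$, bounded, and $D\tilde g\in L^{p'}_\frac1w(I)$, I would show $J_{\Csp}(g)\le\|D\tilde g\|_{L^{p'}_\frac1w(I)}$ by the same integration-by-parts computation as in the sufficiency half of Theorem~\ref{cesp}, but now valid on all of $\Csp$: for $f\in\Csp$, using $f\in L^1_{\loc,(\nu,1)}$ and the fact that $D\tilde g$ lives in $I_{\nu,b}$, the boundary terms at $c$ and $d$ vanish (at $d$, say, because $\tilde g$ is eventually zero; at $c$ because of the endpoint behaviour of $\tilde g$ as in Lemma~\ref{sequence}/Theorem~\ref{cesp}), giving $\int_I fg=-\int_I\big[\int_{I_{\nu,x}}f\big]D\tilde g(x)\,dx$ and then H\"older against $\|f\|_{\Csp}$. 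The reverse inequality $J_{\Csp}(g)\ge\|D\tilde g\|_{L^{p'}_\frac1w(I)}$ follows from $J_{\Csp}(g)\ge J_{\Csp}(g;C_0^\infty(I))\ge\|D\tilde g\|_{L^{p'}_\frac1w(I)}$ by Lemma~\ref{neobkh}. The main obstacle I anticipate is the necessity of the cutoff $\chi_{I_{\nu,b}}$: one must design the right family of test bumps near the endpoint $d$ (for $\nu=2$) — exploiting that $\Csp$-norm of a single-interval bump is comparable to $(\int|h|)(\int w^p)^{1/p}$ over that interval (Lemma~\ref{chasto}), which stays small even when $\int_I w^p=\infty$ — and pair it against the nonvanishing tail $\int_{I_{\nu,x}^*}D\tilde g$ to force $D\tilde g$ to have compact essential support; keeping track of the two endpoints simultaneously and the sign of $\tilde g$ is the delicate bookkeeping.
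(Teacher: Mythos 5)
Your overall strategy coincides with the paper's: reduce to Theorem~\ref{cesp} to get the $AC_\loc$ representative, the formula $\tilde g(x)=(-1)^\nu\int_{I_{\nu,x}^*}D\tilde g$ and the norm identity, prove sufficiency by approximating $f\in\Csp$ by $f\chi_{[\alpha_n,\beta_n]}\in\Ces$ (that half of your argument is essentially the paper's, as is the lower bound via $C_0^\infty(I)$), and obtain the extra conditions $\tilde g\in L^\infty(I)$ and the cutoff $\chi_{I_{\nu,b}}$ from Lemma~\ref{chasto}-type cheap test functions. But the necessity half as you describe it has a genuine gap: the functions produced by Lemma~\ref{chasto} are cheap in $\Csp$ \emph{precisely because} their sign oscillates on each small subinterval, so you cannot simultaneously ``choose $\sgn f$ to align with $\sgn\tilde g$'' and keep $\|f\|_{\Csp}$ small. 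For the oscillating $f$ the quantity $\int_I fg$ can perfectly well be $0$ (e.g.\ when $g$ is nearly constant on each bump), so your claimed divergence ``$\int_I fg\approx\sum|\tilde g(x_k)|\cdot(\text{mass})$'' does not follow. What the construction actually yields is $\int_I|fg|=\infty$ for some $f\in\Csp$ with $\|f\|_{\Csp}\le 1$, i.e.\ $fg\notin L^1(I)$, and it is \emph{this} that contradicts $J_{\Csp}(g)<\infty$ (the pairing is not even defined). That is exactly how the paper argues, both for the $L^\infty$ bound (taking $h\in L^1(I_{\nu,a})$ with $\int_{I_{\nu,a}}|hg|=\infty$ and realizing $|h|$ as the modulus of a small-norm element of $\Csp$) and for the cutoff.

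You also have the orientation of the cutoff backwards, and you visibly leave that step unresolved (``here one must check carefully\dots''). For $\nu=2$ one has $I_{\nu,b}=[b,d)$, so $\chi_{I_{\nu,b}}$ forces $\tilde g\equiv 0$ on $(c,b)$ --- at the \emph{same} endpoint $c$ where Lemma~\ref{neprg} already gives $\tilde g(c+0)=0$; the cutoff upgrades ``tends to $0$'' to ``vanishes identically near $c$'' (symmetrically, near $d$ when $\nu=1$), while it is the $L^\infty$ condition that constrains the opposite endpoint. The paper proves the cutoff by assuming $E=\{|\tilde g|>0\}$ meets every $I_{\nu,b}^*$ in a set of positive measure, choosing disjoint intervals $[a_k,b_k]$ accumulating at that endpoint with $m_k=\min_{[a_k,b_k]}|\tilde g|>0$, and placing on each a Lemma~\ref{chasto} function with $\|f_k\|_{\Csp}<2^{-k}$ whose $L^1$-mass is calibrated so that $\int_{a_k}^{b_k}|f_k\tilde g|\ge 1/k$; then $f=\sum_kf_k$ satisfies $\|f\|_{\Csp}\le 1$ and $\int_I|fg|=\infty$. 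Your sketch does not supply this argument, so the necessity of the form $\tilde g=(-1)^\nu\chi_{I_{\nu,b}}\int_{I_{\nu,\cdot}^*}D\tilde g$ remains unproved in your proposal.
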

  
 \begin{proof} {\it Sufficiency.} 
 Fix an  arbitrary $f\in \Csp$.  We have
 $$\int_I |fg|\le \|\tilde g\|_{L^\infty(I)}\int_{I_{\nu,b}}|f|<\infty.$$
 Let   $\{\alpha_k\}_1^\infty$, $\{\beta_k\}_1^\infty$ be the sequences existence of which is proved in Lemma \ref{sequence}.  For $n\in\mathbb{N}$ we put $f_n:=f\chi_{[\alpha_n,\,\beta_n]}$. Then $f_n\in \Ces$ and  (see Corollary \ref{dences})
$\lim_{n\to\infty}\|f-f_n\|_{\Csp}=0$. By the dominated convergence  theorem, we have
$\lim_{n\to\infty}\int_I  f_n\tilde g=\int_I f\tilde g$
and  by Theorem~\ref{cesp} we obtain
$$\left|\int_I fg\right|\le \lim_{n\to\infty}\left|\int_I f_n\tilde g\right|\le\|D\tilde g\|_{L^{p'}_\frac{1}{w}(I)} \|f\|_{\Csp}.$$
Therefore, $J_{\Csp}(g)\le \|D\tilde g\|_{L^{p'}_\frac{1}{w}(I)}<\infty$.

 {\it Necessity.} Denote $E:=\{x\in I:|\tilde g(x)|>0\}$. Since $|\tilde g|$ is continuous on $I$, the set $E$ is open.  Assume that ${\cal L}^1(I_{\nu,b}^*\cap E)>0$ for any $b\in I$.  Then there exists sequence $\{[a_k,b_k]\}_1^\infty$ of disjoint intervals such that $m_k:=\min_{x\in [a_k,b_k]}|\tilde g(x)|>0$. Put $\theta_k:=\frac{b_k-a_k}{k m_k}$. 
 By Lemma~\ref{chasto} there exists $f_k\in \Ces$ with properties: $\|f_k\|_{\Csp}<2^{-k}$, $\supp f_k\subset[a_k,b_k]$ and $|f_k|=\theta_k$ on $(a_k,b_k)$.
 Then for the function
 $f:=\sum_{k=1}^\infty f_k$ we have $\|f\|_{\Csp}\le 1$
 and
 $$\int_I|fg|\ge \sum_{k=1}^\infty \theta_k m_k(b_k-a_k)=\sum_{k=1}^\infty\frac{1}{k}=\infty.$$
 This contradicts the relation $J_{\Csp}(g)<\infty$.

 Fix $a\in I$. Assume that  $g\not\in L^\infty(I_{\nu,a})$. Then there exists  $h\in L^1(I_{\nu,a})$ such that $\int_{I_{\nu,a}} |hg|=\infty$. Let $\{a_k\}_{k=0}^\infty$, where $a_0=a$, be partition of $I_{\nu,a}$.
 By Lemma~\ref{chasto} there exists $f_k\in \Ces$ with properties: $\|f_k\|_{\Csp}<2^{-k}$, $\supp f_k\subset(I_{\nu,a_k}\setminus I_{\nu,a_{k+1}})$  and $|f_k|=|h|$ on interior of $I_{\nu,a_k}\setminus I_{\nu,a_{k+1}}$.
 Then for the function 
 $f:=\sum_{k=1}^\infty f_k$ we have $\|f\|_{\Csp}\le 1$ and
$$\int_I|fg|\ge \int_{I_{\nu,a}}|hg|=\infty.$$
This contradicts the relation $J_{\Csp}(g)<\infty$.
  
  The rest of the statements follow from Theorem~\ref{cesp}.
  \end{proof}

\begin{cor}
 Let \eqref{polozh} hold and $\nu=1$. Then  for each $f\in L^1_{\loc,(\nu,1)}$ the equalities $\|f\|_{(\Csp)^\downarrow_{w,w}}=\|f\|_{\Csp}$ and  
 $\|f\|_{(\Csp)^\downarrow_{w,s}}=\|f\|_{\Ces}$ hold. Moreover, $(\Csp)^\downarrow_{w,w}=\Csp$, $(\Csp)^\downarrow_{w,s}=\Ces$.
 \end{cor}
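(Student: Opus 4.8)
The plan is to use the explicit description of $(\Csp)'_w$ provided by Theorem~\ref{cesshtrih} to identify the cone $Y_w$, and then to reduce both down-norms to an extremal problem for the $L^{p'}(I)$--$L^p(I)$ pairing. Fix $\nu=1$ throughout (this is essential: for $\nu=2$ the shape $\tilde h(x)=\chi_{I_{2,b}}(x)\int_{I_{2,x}^*}D\tilde h$ forced by Theorem~\ref{cesshtrih} is incompatible with $\tilde h\ge0$ and $\tilde h$ non-increasing unless $\tilde h\equiv0$, so $Y_w=\{0\}$ there). First I would work out $Y_w=\{h\in(\Csp)'_w:h\ge0,\ h\downarrow\}$: by Theorem~\ref{cesshtrih} every $h\in(\Csp)'_w$ has a representative $\tilde h\in AC_\loc(I)\cap L^\infty(I)$ of the form $\tilde h(x)=-\chi_{I_{1,b}}(x)\int_{I_{1,x}^*}D\tilde h$ for some $b\in I$, with $D\tilde h\in L^{p'}_\frac{1}{w}(I)$; imposing $\tilde h\ge0$ and $\tilde h$ non-increasing forces $D\tilde h\le0$ $\leb$-a.e., so that, putting $\psi:=-D\tilde h\ge0$, one has $h(x)=\int_{I_{1,x}^*}\psi$ with $\psi\in L^{p'}_\frac{1}{w}(I)$ vanishing on $I_{1,b}^*$, $\int_{I_{1,b}}\psi<\infty$, and $\|h\|_{(\Csp)'_w}=\|\psi\|_{L^{p'}_\frac{1}{w}(I)}$; conversely every such $\psi$ yields an $h\in Y_w$. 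Here I would verify that requiring $h\ge0$ and $h\downarrow$ is exactly the same as requiring $\psi\ge0$, and that membership in $L^\infty(I)$ reduces, on the support, to $\int_{I_{1,b}}\psi<\infty$.

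Next, for $f\in L^1_{\loc,(1,1)}$ and $h\in Y_w$ as above, Fubini's theorem (legitimate since $\big(\int_{I_{1,b}}|f|\big)\big(\int_{I_{1,b}}\psi\big)<\infty$) gives
$$\int_I hf=\int_I\psi(t)\Big(\int_{I_{1,t}}f\Big)\,dt,\qquad\int_I h|f|=\int_I\psi(t)\Big(\int_{I_{1,t}}|f|\Big)\,dt,$$
and, after the substitution $\psi=w\phi$ (so that $\|\psi\|_{L^{p'}_\frac{1}{w}(I)}=\|\phi\|_{L^{p'}(I)}$), the two definitions take the form
$$\|f\|_{(\Csp)^\downarrow_{w,s}}=\sup_{\phi}\frac{\int_I\phi(t)\,w(t)\int_{I_{1,t}}|f|\,dt}{\|\phi\|_{L^{p'}(I)}},\qquad\|f\|_{(\Csp)^\downarrow_{w,w}}=\sup_{\phi}\frac{\big|\int_I\phi(t)\,w(t)\int_{I_{1,t}}f\,dt\big|}{\|\phi\|_{L^{p'}(I)}},$$
the suprema being over $\phi\ge0$ for which $w\phi$ is an admissible $\psi$. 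In the first case the integrand $w(t)\int_{I_{1,t}}|f|$ is non-negative, and choosing $\phi$ proportional to its $(p-1)$-th power gives the value $\big\|w(\cdot)\int_{I_{1,\cdot}}|f|\big\|_{L^p(I)}=\|f\|_{\Ces}$. In the second case one argues similarly and identifies the value with $\big\|w(\cdot)\int_{I_{1,\cdot}}f\big\|_{L^p(I)}=\|f\|_{\Csp}$. In both cases the extremal $\phi$ need not obey the support and integrability constraints inherited from $Y_w$, so I would first replace $\phi$ by $\phi\chi_{[a_n,b_n]}$ for an exhausting sequence of compact subintervals $[a_n,b_n]\subset I$ and let $n\to\infty$, invoking monotone convergence (and, if the target $L^p$-norm is infinite, the same truncations already produce arbitrarily large quotients).

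The reverse inequalities are straightforward: for $h\in Y_w\subset(\Csp)'_w$ and $f\in\Csp$ the definition of $(\Csp)'_w$ gives $\big|\int_I hf\big|\le\|f\|_{\Csp}\|h\|_{(\Csp)'_w}$, whence $\|f\|_{(\Csp)^\downarrow_{w,w}}\le\|f\|_{\Csp}$; and Hölder's inequality applied to $\int_I h|f|=\int_I(\psi w^{-1})\big(w\int_{I_{1,t}}|f|\big)\,dt$ gives $\|f\|_{(\Csp)^\downarrow_{w,s}}\le\|f\|_{\Ces}$. Together with the previous paragraph this yields both norm identities, and the space identities $(\Csp)^\downarrow_{w,w}=\Csp$, $(\Csp)^\downarrow_{w,s}=\Ces$ follow at once. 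The point I expect to require the most care is the evaluation of the second supremum — verifying that restricting the test functions to those coming from $Y_w$, which are forced to vanish near $d$, to be integrable near $c$, and in particular to be of one sign, still recovers the full $L^p$-norm $\|f\|_{\Csp}$ — together with the accompanying limiting argument; the Fubini identity, the substitution $\psi=w\phi$, and the elementary $L^{p'}$--$L^p$ extremal computation are otherwise routine.
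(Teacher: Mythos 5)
Your reduction is the same as the paper's: you identify $Y_w$ (for $\nu=1$) with the functions $h(x)=\chi_{(c,b]}(x)\int_x^{b}\psi$, $0\le\psi\in L^{p'}_{\frac1w}(c,b)\cap L^1(c,b)$, with $\|h\|_{(\Csp)'_w}=\|\psi\|_{L^{p'}_\frac{1}{w}(I)}$, pass through Fubini to $\int_I hf=\int_c^b\psi(y)\bigl[\int_c^y f\bigr]\,dy$, and then evaluate a constrained $L^{p'}$--$L^p$ extremal problem; the paper writes exactly the same double supremum over $b\in I$ and $0\le h\in L^{p'}_{\frac1w}(I_{\nu,b})\cap L^1(I_{\nu,b})$. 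Your treatment of the $s$-norm, of the two reverse inequalities, and of the truncation/monotone-convergence limit is fine, as is the aside about $\nu=2$.

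The gap is precisely at the step you flag and then dispose of with ``one argues similarly.'' For the $s$-norm the integrand $G:=w(\cdot)\int_{I_{1,\cdot}}|f|$ is non-negative, so $\phi\propto G^{p-1}$ is an admissible (non-negative) test function and the supremum is $\|G\|_{L^p}$. For the $w$-norm the integrand $G:=w(\cdot)\int_{I_{1,\cdot}}f$ is signed, the unconstrained extremizer $\phi\propto|G|^{p-2}G$ is \emph{not} non-negative, and over the cone $\phi\ge0$ one has, for each fixed $b$,
\begin{equation*}
\sup_{0\le\phi}\frac{\bigl|\int_c^b\phi G\bigr|}{\|\phi\|_{L^{p'}(c,b)}}=\max\bigl(\|G_+\|_{L^p(c,b)},\,\|G_-\|_{L^p(c,b)}\bigr)
\end{equation*}
(upper bound since $\int\phi G\le\int\phi G_+$ and $-\int\phi G\le\int\phi G_-$; lower bound by testing with $G_{\pm}^{p-1}$). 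Letting $b\to d$ yields $\max(\|G_+\|_{L^p(I)},\|G_-\|_{L^p(I)})$, which is strictly smaller than $\|G\|_{L^p(I)}=\|f\|_{\Csp}$ whenever both $G_+$ and $G_-$ are nontrivial in $L^p$ --- already for $w\equiv1$, $p=2$ and an $f$ whose primitive changes sign. So ``arguing similarly'' cannot close this step: you would need either a genuinely different idea or a sign hypothesis on $\int_{I_{1,\cdot}}f$; only the set equality $(\Csp)^\downarrow_{w,w}=\Csp$ (with equivalence constant $2^{1/p}$) survives this computation. To be fair, the paper's own proof asserts the same final equality with no more justification than yours, so you have reproduced its argument faithfully; but the point you single out as ``requiring the most care'' is, as written, not established. (A lesser omission: for the set equalities one must also show that the down-spaces sit inside $L^1_{\loc,(1,1)}$, which the paper does by testing against a suitable bounded element of $Y_w$; your proposal fixes $f\in L^1_{\loc,(1,1)}$ from the outset and never returns to this.)
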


\begin{proof} Fix an arbitrary $f\in L^1_{\loc,(\nu,1)}$. Let $g\in \mathfrak{M}(I)$  be a non-negative and non-increasing function. By Theorem \ref{cesshtrih} $g\in (\Csp)'_w$ if and only if  
$g$ admits  a representative $\tilde g\in AC_\loc(I)\cap L^\infty(I)$ of the form $\tilde g(x)=(-1)^\nu\chi_{I_{\nu,b}}(x)\int_{I_{\nu,x}^*} D\tilde g$, $x\in I$, where $b\in I$,  $D\tilde g$ is non-negative and belongs to $L^{p'}_\frac{1}{w}(I)$. Hence
\begin{align*}
\|f\|_{(\Csp)^\downarrow_{w,w}}&=\sup_{b\in I}\sup_{0\le h\in L^{p'}_\frac{1}{w}(I_{\nu,b})\cap L^1(I_{\nu,b})}\frac{\left|\int_{I_{\nu,b}} f(x)\left[\int_{I_{\nu,b}\cap I_{\nu,x}^*} h\right]\,dx\right|}{\|h\|_{L^{p'}_\frac{1}{w}(I_{\nu,b})}}\\
&=\|f\|_{\Csp} 
\end{align*}
and, analogously, $\|f\|_{(\Csp)^\downarrow_{w,s}}=\|f\|_{\Ces}$.

Since for any $b\in I$ there is $h\in (\Csp)'_w$ such that $h(x)=1$ for $x\in I_{\nu,b}$, then    $(\Csp)^\downarrow_{w,w}\subset L^1_{\loc,(\nu,1)}$ and $(\Csp)^\downarrow_{w,s}\subset L^1_{\loc,(\nu,1)}$. Thus, $(\Csp)^\downarrow_{w,w}=\Csp$ and $(\Csp)^\downarrow_{w,s}=\Ces$.
\end{proof}

\begin{theo} Let \eqref{polozh} hold. For any $\Lambda\in (\Csp)^*$ there exists $h\in L^{p'}_\frac{1}{w}(I)$, such that 
\begin{equation}\label{predstfunk}
\Lambda f=\int_I h(x)\left[\int_{I_{\nu,x}}f\right]\,dx,\ \ f\in\Csp, 
\end{equation}
and $\|\Lambda\|_{(\Csp)^*}=\|h\|_{L^{p'}_\frac{1}{w}(I)}$.
  \end{theo}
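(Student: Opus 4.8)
The plan is to identify $(\Csp)^*$ with the subspace of functionals of the form $f\mapsto\int_I fg$ together with $(\Csp)'_w$, and then transport the description of $(\Csp)'_w$ from Theorem~\ref{cesshtrih}. First I would argue that every $\Lambda\in(\Csp)^*$ is represented by integration against some $g\in(\Csp)'_w$ with $\|\Lambda\|_{(\Csp)^*}=J_{\Csp}(g)=\|g\|_{(\Csp)'_w}$: indeed, since $\Ces$ is dense in $\Csp$ by Corollary~\ref{dences}, $\Lambda$ is determined by its restriction to $\Ces$, and on $\Ces$ it is a continuous functional for the $\Csp$-norm; composing with the embedding and using that $C_0^\infty(I)\subset\Ces\subset\Csp$, one recovers a locally integrable kernel exactly as in Lemma~\ref{neobkh}, so $\Lambda f=\int_I fg$ on $\Ces$ for some $g$, whence $J_{\Csp}(g;\Ces)<\infty$ and by density $J_{\Csp}(g)<\infty$.

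Next I would invoke Theorem~\ref{cesshtrih}: this $g$ has a representative $\tilde g\in AC_\loc(I)\cap L^\infty(I)$ of the form $\tilde g(x)=(-1)^\nu\chi_{I_{\nu,b}}(x)\int_{I_{\nu,x}^*}D\tilde g$ with $h:=D\tilde g\in L^{p'}_\frac{1}{w}(I)$, and $J_{\Csp}(g)=\|h\|_{L^{p'}_\frac{1}{w}(I)}$. Then for $f\in\Ces$ integration by parts (exactly the computation already carried out in the sufficiency part of Theorem~\ref{cesp}, using $\tilde g(c+0)=0$ or $\tilde g(d-0)=0$ on the support side) gives
$$\int_I f\tilde g=-\int_I\Big[\int_{I_{\nu,x}}f\Big]D\tilde g(x)\,dx=\int_I h(x)\Big[\int_{I_{\nu,x}}f\Big]\,dx,$$
which is \eqref{predstfunk} on the dense subspace $\Ces$. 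To extend \eqref{predstfunk} to all $f\in\Csp$ I would use the approximation $f_n:=f\chi_{[\alpha_n,\beta_n]}$ from Lemma~\ref{sequence}/Corollary~\ref{dences}, for which $\|f-f_n\|_{\Csp}\to0$; the left side $\Lambda f_n\to\Lambda f$ by continuity, and the right side converges to $\int_I h\big[\int_{I_{\nu,x}}f\big]$ because $\big|\int_I h\big[\int_{I_{\nu,x}}(f-f_n)\big]\big|\le\|h\|_{L^{p'}_\frac{1}{w}(I)}\|f-f_n\|_{\Csp}$ by Hölder's inequality (this bound is also what shows the right-hand side of \eqref{predstfunk} defines a bounded functional, giving $\|\Lambda\|_{(\Csp)^*}\le\|h\|_{L^{p'}_\frac{1}{w}(I)}$). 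Combined with $\|\Lambda\|_{(\Csp)^*}=J_{\Csp}(g)=\|h\|_{L^{p'}_\frac{1}{w}(I)}$ from the previous step, equality of norms follows.

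The main obstacle, and the only point needing care, is the first step: producing the kernel $g$ representing $\Lambda$, since $\Csp$ is only a seminormed, non-ideal space without absolutely continuous norm, so the standard Riesz-type representation from \cite{BenSharp-1988} does not apply directly. The fix is to work through the dense subspace $\Ces$ and a Hahn--Banach/Riesz argument on $L^p_w(I)$ as in Lemma~\ref{neobkh}: restricting $\Lambda$ to $C_0^\infty(I)$ and writing each test function as $\phi=\int_{I_{\nu,\cdot}}D\phi$ reduces matters to a bounded functional on a weighted $L^p$ space, yielding $v\in L^{p'}_\frac1w(I)$ and, after the same regularity argument, the representative $\tilde g$; density of $\Ces$ then upgrades this to the full statement. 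Everything after that is the routine integration-by-parts and limiting argument above.
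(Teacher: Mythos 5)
Your overall architecture (recover a kernel, integrate by parts, extend by density using the H\"older bound $\bigl|\int_I h(x)\bigl[\int_{I_{\nu,x}}f\bigr]\,dx\bigr|\le\|h\|_{L^{p'}_\frac{1}{w}(I)}\|f\|_{\Csp}$) matches the paper's, and the second half of your argument, where you extend the representation \eqref{predstfunk} rather than the representation $f\mapsto\int_I fg$ along $f_n=f\chi_{[\alpha_n,\beta_n]}$, is essentially the paper's proof. But your first step contains a genuine error: from $J_{\Csp}(g;\Ces)<\infty$ you conclude that ``by density $J_{\Csp}(g)<\infty$'' and then invoke Theorem~\ref{cesshtrih}. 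Density of $\Ces$ in $\Csp$ lets you extend the \emph{functional} $\Lambda$, but not the \emph{formula} $f\mapsto\int_I fg$: for $f\in\Csp\setminus\Ces$ the integral $\int_I fg$ need not converge, and the continuous extension of $\Lambda$ is given by the integrated-by-parts expression, not by $\int_I fg$. In fact $(\Csp)'_w$ captures only a proper subclass of the kernels arising from $(\Csp)^*$: Theorem~\ref{cesshtrih} forces $\tilde g\in L^\infty(I)$ and $\tilde g=0$ on $I_{\nu,b}^*$, whereas the theorem you are proving allows an arbitrary $h\in L^{p'}_\frac{1}{w}(I)$, for which $\tilde g(x)=(-1)^\nu\int_{I_{\nu,x}^*}h$ may well be unbounded. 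For example, with $I=(0,1)$, $\nu=1$, $w(x)=x^{-1}$, $h(x)=x^{-1}$ one has $\|h\|_{L^{p'}_\frac{1}{w}(I)}=1$ but $\tilde g(x)=\ln x$, so $J_{\Csp}(\tilde g)=\infty$ by Theorem~\ref{cesshtrih} even though the functional is bounded with norm $1$. This is precisely the point of the Remark following Theorem~\ref{cesp}. The repair is to use Theorem~\ref{cesp} (the characterization of $J_{\Csp}(g;\Ces)$) in place of Theorem~\ref{cesshtrih}: nothing in the remainder of your argument actually uses boundedness of $\tilde g$ or the factor $\chi_{I_{\nu,b}}$, and the norm identity should read $\|\Lambda\|_{(\Csp)^*}=J_{\Csp}(g;\Ces)=\|D\tilde g\|_{L^{p'}_\frac{1}{w}(I)}$, the first equality coming from density of $\Ces$ together with the H\"older bound.

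A secondary gap: to produce the kernel $g$ on $\Ces$ you appeal to ``exactly as in Lemma~\ref{neobkh},'' but that lemma starts from a functional already written as $\phi\mapsto\int_I g\,D\phi$. The paper instead obtains $g$ from the Riesz-type representation of $(\Ces)^*$, available because $\Ces$ is an ideal space with absolutely continuous norm. Your $C_0^\infty$/Hahn--Banach variant does produce $v$ directly from $\phi\mapsto\Lambda(D\phi)$, but it then yields \eqref{predstfunk} only on $\{D\phi:\phi\in C_0^\infty(I)\}$, and you would still need to prove that this set is dense in $\Csp$ --- a nontrivial extra step which the paper's route through $(\Ces)^*$ and Corollary~\ref{dences} avoids.
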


\begin{proof}   Fix an arbitrary  $\Lambda\in (\Csp)^*$.  Let $\bar\Lambda$ be restriction $\Lambda$ on $\Ces$. Then $\bar\Lambda\in (\Ces)^*$ and there exists function $g\in (\Ces)'_s$ such that $\bar\Lambda f=\int_I fg$ for any $f\in\Ces$. Consequently, for any $f\in\Ces$ we have
 $$\left|\int_I fg\right|=|\bar\Lambda f|=|\Lambda f|\le \|\Lambda\|_{(\Csp)^*}\|f\|_{\Csp},$$  
 that is $J_{\Csp}(g;\Ces)\le \|\Lambda\|_{(\Csp)^*}<\infty$.
 By Theorem \ref{cesp} $g$ admits  a representative $\tilde g\in AC_\loc(I)$ of the form $\tilde g(x)=(-1)^\nu\int_{I_{\nu,x}^*} D\tilde g$, $x\in I$,  $D\tilde g\in L^{p'}_\frac{1}{w}(I)$, and $J_{\Csp}(g;\Ces)=\|D\tilde g\|_{L^{p'}_\frac{1}{w}(I)}$.

 Fix any $f\in\Csp$. Let   $\{\alpha_k\}_1^\infty$, $\{\beta_k\}_1^\infty$ be the sequences existence of which is proved in Lemma \ref{sequence}.  For $n\in\mathbb{N}$ we put $f_n:=f\chi_{[\alpha_n,\,\beta_n]}$. Then $f_n\in \Ces$,  
$\lim_{n\to\infty}\|f-f_n\|_{\Csp}=0$ (see Corollary \ref{dences}) and, consequently, $\Lambda f=\lim_{n\to\infty}\Lambda f_n$.

The case $\nu=2$. We write
\begin{align*}
&\Lambda f_n=\bar\Lambda f_n=(-1)^\nu\int_{\alpha_n}^{\beta_n}f(x)\left[\int_{I_{\nu,x}^*} D\tilde g\right]\,dx\\
&=\int_I D\tilde g (y)\left[\int_{\alpha_n}^{\beta_n} f(x)\chi_{(c,x]}(y)\,dx\right]\,dy=\int_c^{\beta_n} D\tilde g (y)\left[\int_{\max\{\alpha_n,y\}}^{\beta_n} f\right]\,dy\\
&=\left[\int_c^{\alpha_n} D\tilde g\right] \left[\int_{\alpha_n}^d f-\int_{\beta_n}^d f\right]+\int_{\alpha_n}^{\beta_n} D\tilde g (y)\left[\int_y^d f-\int_{\beta_n}^d f\right]\,dy\\
&=\int_{\alpha_n}^{\beta_n}D\tilde g(y)\left[\int_{I_{\nu,y}} f\right]\,dy+\left[\int_{I_{\nu,\alpha_n}} f\right]\int_{I_{\nu,\alpha_n}^*} D\tilde g-\left[\int_{I_{\nu,\beta_n}} f\right]\int_{I_{\nu,\beta_n}^*} D\tilde g.
\end{align*}
Analogously in case $\nu=1$ we have
\begin{align*}
&\Lambda f_n=\bar\Lambda f_n=(-1)^\nu\int_{\alpha_n}^{\beta_n}f(x)\left[\int_{I_{\nu,x}^*} D\tilde g\right]\,dx\\
&=(-1)^\nu\int_{\alpha_n}^{\beta_n}D\tilde g(y)\left[\int_{I_{\nu,y}} f\right]\,dy-(-1)^\nu\left[\int_{I_{\nu,\alpha_n}} f\right]\int_{I_{\nu,\alpha_n}^*} D\tilde g+\\
&+(-1)^\nu\left[\int_{I_{\nu,\beta_n}} f\right]\int_{I_{\nu,\beta_n}^*} D\tilde g.
\end{align*}
Since $D\tilde g\in L^{p'}_\frac{1}{w}(I)$ by Lemma \ref{sequence} we have
$\lim_{n\to\infty}\left|\int_{I_{\nu,\alpha_n}}f \cdot \int_{I_{\nu,\alpha_n}^*} D\tilde g\right|=0$ and $\lim_{n\to\infty}\left|\int_{I_{\nu,\beta_n}}f \cdot \int_{I_{\nu,\beta_n}^*} D\tilde g\right|=0$. Besides that $(D\tilde g(\cdot) \int_{I_{\nu,\cdot}}f)\in L^1(I)$ and by Lebesgue dominated theorem we get 
$$\lim_{n\to\infty}\int_{\alpha_n}^{\beta_n}D\tilde g(x) \left[\int_{I_{\nu,x}}f\right]\,dx=\int_I D\tilde g(x) \left[\int_{I_{\nu,x}}f\right]\,dx.$$
Putting $h:=(-1)^\nu D\tilde g$, we have \eqref{predstfunk} and $\|\Lambda\|_{(\Csp)^*}=\|h\|_{L^{p'}_\frac{1}{w}(I)}$. 
\end{proof}

\end{document}